\documentclass{amsart}
\usepackage{newtxtext}
\usepackage{latexsym}
\usepackage{amsfonts}
\usepackage[all]{xy}
\usepackage{lmodern}
\usepackage{amscd,color}
\usepackage{amsmath}
\usepackage{amssymb}
\usepackage{mathtools} 
\usepackage[pdftex]{hyperref}  
\usepackage[utf8]{inputenc} 
\usepackage[english]{babel}
\usepackage{cancel} 
\usepackage{amssymb, mathrsfs, amsfonts, amsmath}
\usepackage{amsbsy}
\usepackage{amsfonts}
\newtheorem{corollary}{Corollary}

\newtheorem{definition}{Definition}
\newtheorem{lemma}{Lemma}

\newtheorem{remark}{Remark}
\newtheorem{theorem}{Theorem}

\numberwithin{equation}{section}

\begin{document}
	
	\title[Majumdar-Papapetrou class of solutions]{Solution for the Einstein-Maxwell equations invariant under an $(n - 1)$-dimensional group of dilations}

	\author{Benedito Leandro}
	\address{{B. Leandro - Department of Mathematics, University of Brasilia\\
			Brasilia-DF, 70910-900, Brazil.}
		\email{bleandrone@mat.unb.br}
		\thanks{Partially supported by CNPq/Brazil Grant 303157/2022-4, CNPq/Brazil Grant 400078/2025-2, and FAPDF - 00193-00001678/2024-39.}}
	
	\author{Ilton Menezes}
	\address{{I. Menezes - Centro das Ci\^encias Exatas e das Tecnologias, Universidade Federal do Oeste da Bahia, CEP 47808-021, Barreiras, BA, Brazil}
		\email{ilton.menezes@ufob.edu.br}
		\thanks{}}

	\author{Rafael Novais}
	\address{{R. Novais - Instituto Federal de Educaç\~ao, Ci\^encia e Tecnologia, Campus Posse, CEP 73900-000 Posse, GO, Brazil}
		\email{rafael.novais@ifgoiano.edu.br}
		\thanks{}}

	\thanks{Corresponding Author: B. Leandro (bleandrone@mat.unb.br)}

	\keywords{Electrostatic system, Conformal metric, Einstein-Maxwell equations, Electrostatic system, Electrovacuum system, Majumdar-Papapetrou solution, Symmetry groups }  \subjclass[2020]{Primary 53C25; 83C22; 53C21; 53Z05}

	\date{\today}
	
	\dedicatory{}
	
	\begin{abstract}
		We consider an electrostatic system whose spatial factor is conformal to an $n$-dimensional Euclidean space. We provide a complete characterization of the most general ansatz, thereby reducing the associated electrostatic system of partial differential equations to an ordinary differential equation system. We prove that there are only two possibilities: either the cosmological constant is nonzero, in which case the solutions are necessarily invariant under rotations or translations, or the cosmological constant vanishes, and the solutions belong to the Majumdar--Papapetrou class with a degree of freedom associated with an invariant $(n-1)$-dimensional subgroup. As a result, we introduce a new solution to the electrovacuum system in the Majumdar--Papapetrou class that is invariant under an $(n-1)$--dimensional group of dilations.
	\end{abstract}

	\maketitle


	\section{Introduction}
	
	The Einstein--Maxwell equations with a cosmological constant $\Lambda\in\mathbb{R}$ on a Lorentzian manifold $(\widehat{M}^{n+1},\hat g)$ are given by
	\begin{equation}
		\left\{
		\begin{array}{rcll}
			\displaystyle
			\operatorname{Ric}_{\hat g}-\frac{R_{\hat g}}{2}\hat g + \Lambda \hat g
			&=&
			2\left(F\circ F-\frac{1}{4}|F|_{\hat g}^2\hat g\right), \\[0.3cm]
			dF&=&0,\quad \text{and}\quad \operatorname{div}_{\hat g}F=0,
		\end{array}
		\right.
	\end{equation}
	where $F$ denotes the (Faraday) electromagnetic $(0,2)$-tensor and
	$(F\circ F)_{\alpha\beta}=\hat g^{\sigma\gamma}F_{\alpha\sigma}F_{\beta\gamma}$,
	with Greek indices ranging from $1$ to $n+1$.
	
	A static spacetime is a product manifold $\widehat{M}^{n+1}=\mathbb{R}\times M^n$
	endowed with the metric
	\[
	\hat g=-N^2dt^2+\overline g,
	\]
	where $(M^n,\overline g)$ is an oriented $n$-dimensional Riemannian manifold and
	$N$ is a positive smooth function on $M^n$.
	Choosing the electromagnetic field in the form
	$F=NE^\flat\wedge dt$, where $E^\flat$ denotes the $1$-form dual to the electric
	field $E$, the Einstein--Maxwell system reduces to the following definition
	(see \cite{freitas,Lousa} and the references therein).
	
	\begin{definition}\label{def1}
		Let $(M^n,\overline g)$ be an $n$-dimensional Riemannian manifold,
		$E\in\mathcal X(M)$ a tangent vector field, and $N\in C^\infty(M)$ a positive function.
		The Einstein--Maxwell equations with cosmological constant $\Lambda$ for the static
		spacetime associated with $(M^n,\overline g,N,E)$ are given by
		\begin{equation}\label{eq1-def1}
			\Delta_{\overline g}N
			=
			2N\left(\frac{n-2}{n-1}|E|_{\overline g}^2-\frac{\Lambda}{n-1}\right),
		\end{equation}
		\begin{equation}\label{eq2-def1}
			\operatorname{div}_{\overline g}E=0,
			\qquad
			\operatorname d(NE^\flat)=0,
		\end{equation}
		and
		\begin{equation}\label{eq3-def1}
			\nabla_{\overline g}^2N
			=
			N\left(
			\operatorname{Ric}_{\overline g}
			-\frac{2\Lambda}{n-1}\overline g
			+2E^\flat\otimes E^\flat
			-\frac{2}{n-1}|E|_{\overline g}^2\overline g
			\right).
		\end{equation}
		In this case, $(M^n,\overline g,N,E)$ is called an \emph{electrostatic system}.
		If $(M^n,\overline g)$ is complete, then the electrostatic system is also complete.
	\end{definition}
	
	Here $|\cdot|_{\overline g}$, $\Delta_{\overline g}$, and $\nabla^2_{\overline g}$
	denote the norm, Laplacian, and Hessian with respect to $\overline g$, respectively.
	Moreover, $\operatorname{Ric}_{\overline g}$ and $\operatorname{div}_{\overline g}$
	are the Ricci tensor and divergence associated with $\overline g$.
	The function $N$ and the manifold $M^n$ are referred to as the
	\emph{lapse function} (or static potential) and the spatial factor of the static
	Einstein--Maxwell spacetime.
	An electrostatic system with vanishing cosmological constant is called an
	\emph{electrovacuum system}. In particular, when both the cosmological constant and
	the electric field vanish identically, Definition~\ref{def1} reduces to the static
	vacuum Einstein equations.
	
	Contracting~\eqref{eq3-def1} and combining it with~\eqref{eq1-def1}, we obtain
	\begin{equation}\label{1.4}
		R_{\overline g}=2\left(|E|_{\overline g}^2+\Lambda\right),
	\end{equation}
	where $R_{\overline g}$ denotes the scalar curvature of $(M^n,\overline g)$.
	Since $\operatorname d(NE^\flat)=0$, Poincar\'e's lemma implies the local existence
	of a smooth function $\psi$ such that
	\[
	NE^\flat=\operatorname d\psi.
	\]
	Hence,
	\[
	NE=\nabla_{\overline g}\psi,
	\]
	and we refer to $(M^n,\overline g,N,\psi)$ as an electrostatic system with an electric
	potential $\psi$.
	
	The Reissner--Nordström spacetime is one of the most important solutions of
	Definition~\ref{def1} and can be interpreted as a model for a static black hole
	or a star with electric charge $q$ and mass $m$.
	This solution is called \emph{subextremal}, \emph{extremal}, or
	\emph{superextremal} according to whether $m^{2}>q^{2}$, $m^{2}=q^{2}$,
	or $m^{2}<q^{2}$, respectively.
	Another fundamental electrovacuum solution is the Majumdar--Papapetrou (MP)
	solution, which is associated with the extremal Reissner--Nordström spacetime.
	Both the Reissner--Nordström and Majumdar--Papapetrou solutions are among the
	most important and well-studied electrovacuum systems; see
	\cite{Lousa,Ana}.
	
	The Majumdar--Papapetrou solution represents the static equilibrium of an arbitrary number of charged black holes, whose mutual electric repulsion exactly balances their gravitational attraction.
	This remarkable configuration was later understood to arise as a supersymmetric solution of supergravity.
	More recently, it has been shown that this family constitutes the only
	class of BPS black holes in this theory; see \cite{Lucietti} and the
	references therein.
	The case $m>|q|$ has been completely classified by extending the
	ingenious method of Bunting and Masood-ul-Alam to higher dimensions,
	showing that the unique non-trivial asymptotically flat regular solution
	is the Reissner--Nordström black hole
	(cf.~\cite{Bunting,Chrusciel} for the formal classification of the
	asymptotically flat subextremal electrovacuum system).
	
	In this work, we focus on the extremal case, namely $m=|q|$.
	In this context, Kunduri and Lucietti~\cite{KunduriLucietti} proved that
	an asymptotically flat electrovacuum space $(M^n,g,N,\psi)$ satisfies
	$m=|q|$ if and only if
	\begin{eqnarray}\label{GMP1}
		\pm\sqrt{\frac{2(n-2)}{(n-1)}}\,\psi = 1 - N.
	\end{eqnarray}
	This characterization implies that extremal solutions must belong to
	the Majumdar--Papapetrou class, that is, the class of electrovacuum
	spacetimes $(\mathbb{R}\times M^n,\hat g)$ satisfying~\eqref{GMP1} and
	\begin{eqnarray}\label{MPmetric}
		\hat g=-N^{-2}dt^{2}+\overline g,
	\end{eqnarray}
	where the conformal relation $\overline g=\varphi^{2}g$ holds with
	$\varphi=N^{1/(n-2)}$; see
	\cite{KunduriLucietti,Lousa,Lucietti}.
	The multi-centered extremal black hole solution~\cite{Hawking} is given by
	\begin{eqnarray}\label{MPC}
		N = 1 + \sum_{I=1}^{N}\frac{q_I}{r_I^{\,n-2}},
	\end{eqnarray}
	where $r_I=|x-p_I|$ denotes the Euclidean distance to each center
	$p_I\in\mathbb{R}^n$, with Cartesian coordinates
	$x=(x_1,\ldots,x_n)$.
	Lucietti~\cite{Lucietti} proved that the only asymptotically flat
	spacetimes with a suitably regular event horizon within the
	Majumdar--Papapetrou class of higher-dimensional electrovacuum solutions
	are precisely the standard multi-black holes~\eqref{MPC}.
	Nevertheless, this remains an open problem in full generality;
	see~\cite{KunduriLucietti,Lucietti}.
	
	This work introduces a distinct class of non-asymptotically flat
	Majumdar--Papapetrou solutions.
	Our approach is based on the use of an ansatz that reduces the
	electrostatic system of partial differential equations to a system of
	ordinary differential equations, which can then be explicitly solved.
	Such reduction techniques are classical in the study of nonlinear
	geometric partial differential equations and have been successfully
	applied in a variety of contexts, including warped product constructions
	and rotationally symmetric settings, such as the Bryant soliton, one of
	the most prominent examples of Ricci solitons.
	
	In the broader effort to classify Ricci solitons, several different
	ansatzes have been developed over the years.
	In \cite{keti1}, steady Ricci solitons invariant under the action of an
	$(n-1)$-dimensional translation group were obtained; analogous results
	for Yamabe solitons were established in \cite{BeneditoTenenblat}.
	These constructions were inspired by the work of \cite{Tenenblat}, where
	the symmetry groups of the intrinsic generalized wave and sine--Gordon
	equations were characterized and subsequently employed to produce
	explicit solutions.
	It was shown therein that the corresponding symmetry groups consist only
	of translations and dilations.
	
	Subsequently, in \cite{Be}, the authors studied gradient Ricci solitons
	conformal to an $n$-dimensional pseudo-Euclidean space and described the
	most general substitution reducing the associated system of partial
	differential equations to ordinary differential equations.
	As a consequence, the resulting gradient Ricci solitons are invariant
	under either an $(n-1)$-dimensional translation group or the
	pseudo-orthogonal group acting on the underlying space.
	Related results for static metrics were obtained in \cite{Santos}, where
	the $n$-dimensional Schwarzschild solution arises as an application.
	Static solutions of the Einstein equations invariant under translations
	were further investigated in \cite{Be1,Ana}, while translation-invariant
	solutions for quasi-Einstein metrics were derived in \cite{Ernani}.
	More recently, Lie point symmetries were employed in \cite{RO} to
	construct metrics solving the Ricci curvature and Einstein equations
	with prescribed tensors.
	
	The novelty of the present work lies in the existence of a class of
	static solutions to the Einstein--Maxwell equations with vanishing
	cosmological constant for which the group invariant is determined by an
	equation involving the lapse function.
	In particular, this reveals the presence of additional group-invariant
	solutions within the Majumdar--Papapetrou class, including solutions
	invariant under dilations.
	This behavior contrasts with the rigidity typically observed in known
	group-invariant solutions of Einstein-type equations, such as Ricci
	solitons, static vacuum metrics, and quasi-Einstein metrics, which are
	usually invariant only under translations or rotations
	(see \cite{Santos,Be}).
	The existence of static metrics with fundamentally different types of
	invariance is therefore noteworthy.

	
	To prove our main result, namely Theorem~\ref{ansatz}, item~(1), we carefully
	analyze the system given in Definition~\ref{def1}.
	To this end, we use a conformally flat metric to reduce the system to a set
	of ordinary differential equations.
	This reduction involves a lengthy computational process, which we outline
	below.
	By carrying out this analysis, we obtain a new family of solutions to the
	electrovacuum system in the Majumdar--Papapetrou class, which is invariant
	under an $(n-1)$-dimensional group of dilations, see Theorem~\ref{thm3}.
	This phenomenon is particularly interesting from the point of view of
	partial differential equations because, as far as we know, it has not been
	observed before for equations such as Ricci solitons, Yamabe solitons,
	quasi-Einstein metrics, and static vacuum metrics; see
	\cite{keti1,Be1,BeneditoTenenblat,Be,Ernani,Santos} and the references
	therein.
	
	We now introduce some fundamental notation before stating our main results.
	The functions $\varphi$, $N$, and $\psi$ define a solution to the
	electrostatic system $(\Omega,\overline g,N,\psi)$, where
	$\overline g = g/\varphi^{2}$ is a metric conformal to the Euclidean metric
	$g$, with a conformal factor $\varphi$.
	Throughout the manuscript, $\Omega\subseteq\mathbb{R}^{n}$ denotes an open
	subset on which the functions $\varphi$, $N$, and $\psi$ are defined.
	For convenience, we adopt the conformal change
	$\overline g = g/\varphi^{2}$ instead of $\overline g = \varphi^{2} g$, as
	commonly used in the Majumdar--Papapetrou class, noting that this choice
	does not affect the results, and it is merely a convenience.

	\begin{theorem}\label{ansatz}
		Let $(\mathbb{R}^{n},g)$ be the Euclidean space with Cartesian coordinates
		$(x_1,\ldots,x_n)$ and Euclidean metric $g$.
		Then there exists a smooth function
		$\xi=\xi(x_1,\ldots,x_n)$ such that
		$(\Omega,\overline g,N(\xi),\psi(\xi))$, where
		$\overline g=g/\varphi^{2}(\xi)$, is a solution of the electrostatic system
		if and only if one of the following conditions holds:
		\begin{enumerate}
			\item
			the solution belongs to the Majumdar--Papapetrou class, satisfies
			\eqref{GMP1} and \eqref{MPmetric} with $\Lambda=0$, and
			\begin{eqnarray}\label{eq:xi-condition}
				-\,\frac{d}{d\xi}
				\log\!\left(
				-\frac{d}{d\xi}\left(N^{-1}\right)
				\right)
				=
				\frac{\sum_{k=1}^{n}\partial^{2}_{x_k}\xi}
				{\sum_{k=1}^{n}\left(\partial_{x_k}\xi\right)^2};
			\end{eqnarray}
			
			\item
			the solution does not belong to the Majumdar--Papapetrou class, and
			\begin{eqnarray}\label{eq:ansatz}
				\xi(x_1,\ldots,x_n)
				=
				\Gamma\!\left(
				\sum_{k=1}^{n}\tau x_k^{2}+\gamma_k x_k+\theta_k
				\right).
			\end{eqnarray}
		\end{enumerate}
		Here $\tau$, $\gamma_k$, and $\theta_k$ are real constants, and
		$\Gamma$ is a smooth real function.
	\end{theorem}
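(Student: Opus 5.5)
The plan is to write the electrostatic system of Definition~\ref{def1} for $\overline g=g/\varphi^2$ in Euclidean coordinates and then impose the ansatz $\varphi=\varphi(\xi)$, $N=N(\xi)$, $\psi=\psi(\xi)$.

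\textbf{Step 1: the system in Euclidean coordinates.} For $\overline g=g/\varphi^2$ the standard conformal formulas give
\[
\operatorname{Ric}_{\overline g}=\frac{n-2}{\varphi}\nabla^2\varphi+\Big(\frac{\Delta\varphi}{\varphi}-(n-1)\frac{|\nabla\varphi|^2}{\varphi^2}\Big)g,
\]
and
\[
\nabla^2_{\overline g}N=\nabla^2N+\frac{1}{\varphi}\big(d\varphi\otimes dN+dN\otimes d\varphi\big)-\frac{\langle\nabla\varphi,\nabla N\rangle}{\varphi}g .
\]
Here $\nabla,\Delta$ are Euclidean. We also have $E^\flat=d\psi/N$ and $|E|^2_{\overline g}=\varphi^2|\nabla\psi|^2/N^2$.

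\textbf{Step 2: substitute the ansatz.} Insert $\varphi(\xi),N(\xi),\psi(\xi)$ and use $\partial_i\partial_j f(\xi)=f''\xi_i\xi_j+f'\xi_{ij}$. Equation~\eqref{eq3-def1} then becomes, for each $i,j$, an identity of the form
\[
A(\xi)\,\xi_{ij}+B(\xi)\,\xi_i\xi_j=C(\xi)\,|\nabla\xi|^2\delta_{ij}+D(\xi)\,\delta_{ij}.
\]
The coefficients are
\[
A=N'+(n-2)\tfrac{N}{\varphi}\varphi',
\qquad
B=N''+2\tfrac{\varphi'N'}{\varphi}+(n-2)\tfrac{N\varphi''}{\varphi}+2\tfrac{(\psi')^2}{N}.
\]
The constant $\Lambda$ enters only through $D$, via the term $\frac{2\Lambda N}{(n-1)\varphi^2}$. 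Equation~\eqref{eq1-def1} and $\operatorname{div}_{\overline g}E=0$ similarly become scalar relations involving $\Delta\xi$ and $|\nabla\xi|^2$.

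\textbf{Step 3: split into cases.} The off-diagonal equations ($i\neq j$) give $A\xi_{ij}+B\xi_i\xi_j=0$.

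\emph{Case $A\not\equiv0$.} Then $\xi_{ij}=-(B/A)\xi_i\xi_j$ for $i\neq j$. Comparing the differences of the diagonal equations, as in \cite{Be,Santos}, one shows that a reparametrization $\Gamma$ linearizes $\xi$ in the sense that $\xi_{ij}$ becomes a multiple of $\delta_{ij}$. This means the argument of $\Gamma$ has Hessian $2\tau\,\delta_{ij}$, which forces the form \eqref{eq:ansatz}. This is the alternative (2). One must also check that $A\neq0$ excludes the relation \eqref{GMP1}.

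\emph{Case $A\equiv0$.} Then $N'/N=-(n-2)\varphi'/\varphi$, so $\varphi=cN^{-1/(n-2)}$, which is the Majumdar--Papapetrou conformal relation in our convention $\overline g=g/\varphi^2$. The off-diagonal equations then force $B=0$ on the open set where $\xi_i\xi_j\neq0$; if that set is empty, a rotation argument recovers the same conclusion. Substituting $\varphi$ into $B=0$ yields $(\psi')^2=\frac{n-1}{2(n-2)}(N')^2$, which is \eqref{GMP1} up to constants. The remaining diagonal equation then has only $D$ and $C$ left. Since the $\Lambda$ term carries a different power of $N$ than the others, one gets $\Lambda=0$.

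\textbf{Step 4: derive \eqref{eq:xi-condition}.} Equation~\eqref{eq1-def1} reduces to harmonicity of $1/N$ with respect to the flat metric, the usual MP statement. Indeed, $\Delta(N^{-1}(\xi))=(N^{-1})''|\nabla\xi|^2+(N^{-1})'\Delta\xi=0$, and this is exactly \eqref{eq:xi-condition}. Maxwell's equation $\operatorname{div}_{\overline g}E=0$ should be automatically consistent with this.

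\textbf{Converse and main obstacle.} The converse direction is direct substitution back into the formulas of Step 1. The main obstacle is Step 3 in the case $A\neq0$: extracting that $\xi$ is a function of a quadratic $\sum\tau x_k^2+\gamma_kx_k+\theta_k$. This requires handling the degenerate sets where some $\xi_i=0$, and showing that the compatibility conditions force $B/A$ to be the logarithmic derivative of $\Gamma'$. I would first try to mimic the argument of \cite{Be}, where the same structure $\xi_{ij}=\lambda(\xi)\xi_i\xi_j+\mu\delta_{ij}$ is analyzed.
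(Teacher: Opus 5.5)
Your overall route is the paper's: split according to whether the coefficient of $\xi_{ij}$ in the off-diagonal equation vanishes. In the degenerate branch you extract the Majumdar--Papapetrou relations, $\Lambda=0$ and harmonicity of $N^{-1}$; otherwise you run the argument of \cite{Be}. However, two steps of the degenerate branch fail as written.

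\textbf{First problem: the signs of $A$ and $B$.} Your coefficients have the wrong sign on the curvature and electric terms. From \eqref{eq3-def1} with $E^\flat=d\psi/N$ one gets
\[
A=N'-(n-2)\frac{N\varphi'}{\varphi},\qquad
B=N''+\frac{2\varphi'N'}{\varphi}-(n-2)\frac{N\varphi''}{\varphi}-\frac{2(\psi')^2}{N},
\]
which is $-1/\varphi$ times the paper's brackets $(n-2)\varphi'N-\varphi N'$ and $(n-2)\varphi''N-\varphi N''-2\varphi'N'+2\varphi(\psi')^2/N$. With your signs, several things go wrong:
\begin{enumerate}
\item $A\equiv0$ gives $\varphi=cN^{-1/(n-2)}$. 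This is not the MP relation for $\overline g=g/\varphi^2$; the correct one, used in Theorem~\ref{thm3}, is $\varphi=N^{1/(n-2)}$.
\item Inserting your $\varphi$ into your $B=0$ yields $(\psi')^2=-\frac{n-3}{2(n-2)}(N')^2$, which kills the electric field rather than giving $\frac{n-1}{2(n-2)}(N')^2$.
\item The genuine MP data $\varphi=N^{1/(n-2)}$ have your $A=2N'\neq0$, so your case split does not even isolate them.
\end{enumerate}
So the conclusions you state in that case are asserted, not derived. With the corrected $A,B$ they do follow: $\varphi=cN^{1/(n-2)}$, and then $B=\frac{n-1}{n-2}\frac{(N')^2}{N}-\frac{2(\psi')^2}{N}$.

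\textbf{Second problem: the argument for $\Lambda=0$.} Your normal form omits the term $\varphi'\Delta_g\xi\,\delta_{ij}$ coming from $\Delta_g\varphi$ in $\operatorname{Ric}_{\overline g}$. Once the MP relations hold (take $\varphi=N^{1/(n-2)}$), every diagonal equation reduces to
\[
\bigl[\varphi\varphi''-(n-1)(\varphi')^2\bigr]|\nabla_g\xi|^2+\varphi\varphi'\Delta_g\xi=\frac{2\Lambda}{n-1},
\qquad\text{i.e.}\qquad
\Delta_g\bigl(N^{-1}\bigr)=-\frac{2(n-2)\Lambda}{n-1}\,N^{-\frac{n}{n-2}}.
\]
This is a Yamabe-type equation with local solutions for $\Lambda\neq0$. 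Since $N(\xi)$ and $|\nabla_g\xi|^2$ are both unknown, no comparison of powers of $N$ can remove $\Lambda$ from this single equation.

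The missing idea is to bring in a second equation of the system. Use the trace identity \eqref{1.4}, obtained by combining \eqref{eq1-def1} with the trace of \eqref{eq3-def1}; this is what Lemma~\ref{cont:lem1} encodes. It gives the same left-hand side, but equal to $\frac{\Lambda}{n-1}$, and comparing the two forces $\Lambda=0$.

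Only after this does your Step 4 go through. There you should verify, as the paper does, that the remaining diagonal equation, Maxwell's equation and \eqref{eq1-def1} each reduce to the same condition \eqref{eq:xi-condition}, rather than presume they are consistent.
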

	
	The case $\tau=0$ in item~(2) generalizes the solutions obtained
	in~\cite{Ana} and yields solutions invariant under translations.
	When $\tau\neq 0$, the solutions are spherically symmetric, a case already
	covered in~\cite{Bunting,Chrusciel}.
	Therefore, in the remainder of this work we focus on item~(1) of
	Theorem~\ref{ansatz}.
	In particular, this result allows one to choose an arbitrary smooth
	function $\xi$, provided that equation~\eqref{eq:xi-condition} is
	solvable.

	We show that there exists a Majumdar-Papapetrou solution of the
	electrovacuum equations that is invariant under dilations (not asymptotically flat \cite{KunduriLucietti,Lucietti}). Furthermore, we will show that the classical MP solution can be recovered from Theorem~\ref{ansatz}--Item (1); see Corollary~\ref{MPclassic} below.

	\begin{theorem}\label{thm3}
		Let $\left(\mathbb{R}^{n},g\right)$ be the Euclidean space with Cartesian coordinates $x=(x_1,...\, ,x_n)$ and the Euclidean metric $g$.      Consider $\Omega\subset\mathbb{R}^n$ an open set such that the lapse function $N$ is defined. Here, $1\leq m_1\leq m_2\leq n$, $k\neq0$, $k_1$ are constants. Moreover, 
		\begin{eqnarray*}
			\eta=\sum_{k=1}^{m_2}b_k^2,\quad \theta=-2\sum_{k=1}^{m_1}a_kb_k,\quad \delta=\sum_{k=1}^{m_1}a_k^2,\quad \sum_{i=1}^{m_1}a_i\neq 0,\quad\mbox{and}\quad \prod_{j=1}^{m_2}b_j\neq0,
		\end{eqnarray*}
		where $a_i,\,b_j\in\mathbb{R}$. Then, $$(\Omega,\,\overline{g},\,N,\,\psi),$$ is a solution for the electrovacuum system with
		\begin{eqnarray*}
			\qquad\overline{g}=\dfrac{g}{\varphi^{2}},\,\qquad\pm\sqrt{\dfrac{2(n-2)}{(n-1)}}\psi = 1 - N,\qquad \varphi = N^{1/(n-2)},
		\end{eqnarray*}
		and
		\begin{eqnarray*}
			\dfrac{1}{N(x)}= k_1 + \dfrac{2k}{ \sqrt{4\eta\delta - \theta^2}}\arctan\left( \dfrac{\theta\displaystyle\sum_{j=1}^{m_2}b_jx_j + 2\eta\displaystyle\sum_{i=1}^{m_1}a_ix_i}{\sqrt{4\eta\delta - \theta^2}\displaystyle\sum_{j=1}^{m_2}b_jx_j} \right),
		\end{eqnarray*}
		where $4\eta\delta - \theta^2>0$.
	\end{theorem}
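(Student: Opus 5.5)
Theorem~\ref{thm3} will follow from Theorem~\ref{ansatz}, item~(1). We choose a concrete dilation-invariant function $\xi$ and then solve the ordinary differential equation \eqref{eq:xi-condition} for $N$. Set
\[
u=\sum_{i=1}^{m_1}a_ix_i,\qquad v=\sum_{j=1}^{m_2}b_jx_j,\qquad \xi=\frac{u}{v},
\]
which is homogeneous of degree zero in $x$. Hence $\xi$ is invariant under the group of dilations $x\mapsto\lambda x$, and its level sets are unions of rays, forming an $(n-1)$-dimensional invariant family. The hypothesis $\prod b_j\neq0$ guarantees $v\not\equiv 0$, and $\sum a_i\neq 0$ guarantees that $\xi$ is nonconstant.

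\textbf{Step 1: the right-hand side of \eqref{eq:xi-condition}.} Write $\alpha=(a_1,\dots,a_{m_1},0,\dots)$ and $\beta=(b_1,\dots,b_{m_2},0,\dots)$. Then $\nabla\xi=(v\alpha-u\beta)/v^2$, so
\[
|\nabla\xi|^2=\frac{\delta v^2+\theta uv+\eta u^2}{v^4}=\frac{\delta+\theta\xi+\eta\xi^2}{v^2},
\]
using $\langle\alpha,\alpha\rangle=\delta$, $\langle\alpha,\beta\rangle=-\theta/2$ and $\langle\beta,\beta\rangle=\eta$. Because $u$ and $v$ are linear, differentiating $\nabla\xi=\alpha/v-u\beta/v^2$ once more gives
\[
\Delta\xi=-\frac{2\langle\alpha,\beta\rangle}{v^2}+\frac{2u\eta}{v^3}=\frac{\theta+2\eta\xi}{v^2}.
\]
The factors $v^{-2}$ cancel, and the ratio becomes a function of $\xi$ alone:
\[
\frac{\Delta\xi}{|\nabla\xi|^2}=\frac{2\eta\xi+\theta}{\eta\xi^2+\theta\xi+\delta}.
\]
This cancellation is the key point: it is what makes \eqref{eq:xi-condition} a genuine ordinary differential equation in $\xi$, and it is also the main thing to verify carefully.

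\textbf{Step 2: integrate the ODE.} Put $w=-\frac{d}{d\xi}(N^{-1})$. Equation \eqref{eq:xi-condition} reads $-(\log w)'=(\log(\eta\xi^2+\theta\xi+\delta))'$, so
\[
w=\frac{-k}{\eta\xi^2+\theta\xi+\delta}
\]
for some constant $k\neq0$; the sign of $k$ is absorbed, and $w$ must be nonzero for the logarithm to make sense. Integrating once more,
\[
N^{-1}=k_1+k\int\frac{d\xi}{\eta\xi^2+\theta\xi+\delta}.
\]
The condition $4\eta\delta-\theta^2>0$ means the quadratic has no real roots, so the integrand is smooth everywhere and the integral is the standard arctangent,
\[
\frac{2}{\sqrt{4\eta\delta-\theta^2}}\arctan\frac{2\eta\xi+\theta}{\sqrt{4\eta\delta-\theta^2}}.
\]
Substituting $\xi=u/v$ gives exactly the stated formula for $1/N$. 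Note that $\eta>0$ holds automatically, since $b_1\neq0$.

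\textbf{Step 3: conclude.} By the "if" direction of Theorem~\ref{ansatz}, item~(1), the quadruple $(\Omega,g/\varphi^2,N,\psi)$ with $\varphi=N^{1/(n-2)}$ and $\psi$ given by \eqref{GMP1} solves the electrovacuum system with $\Lambda=0$, on any open set $\Omega$ where $v\neq0$ and $N>0$. One can also double-check this directly: in the Majumdar--Papapetrou class the whole system reduces to the condition that $N^{-1}$ be Euclidean-harmonic. Indeed, $\Delta(N^{-1})=(N^{-1})''|\nabla\xi|^2+(N^{-1})'\Delta\xi$, and this vanishes precisely by \eqref{eq:xi-condition}.
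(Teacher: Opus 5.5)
Your proposal is correct and follows the paper's proof essentially step for step: you compute $|\nabla\xi|^2=(\eta\xi^2+\theta\xi+\delta)/v^2$ and $\Delta\xi=(2\eta\xi+\theta)/v^2$, so the right-hand side of \eqref{eq:xi-condition} depends on $\xi$ alone; you then integrate twice to obtain the arctangent and invoke item~(1) of Theorem~\ref{ansatz}. Your vector notation and explicit use of the sufficiency direction (with the harmonicity check for $N^{-1}$) are cleaner than the paper's case split. One small correction: $\sum a_i\neq 0$ does not by itself make $\xi$ nonconstant (take $m_1=m_2$ and $a=b$); it is the hypothesis $4\eta\delta-\theta^2>0$ that guarantees $|\nabla\xi|^2>0$ on $\{v\neq 0\}$.
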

	
	As a consequence of the theorem above, we obtain a new example of a static
	space-time solution of the Einstein-Maxwell equations with a vanishing
	cosmological constant, defined on
	$\widehat{M}^{n+1}=\mathbb{R}\times\Omega$ and endowed with the metric
	\[
	\widehat{g}=-N^{2}dt^{2}+\frac{g}{\varphi^{2}}.
	\]
	
	We now describe more precisely the domain $\Omega$ on which the lapse
	function is well defined.

	\begin{remark}[The domain $\Omega$ of the lapse function]\label{rem:lapse-positive}
		The lapse function obtained in Theorem~\ref{thm3} depends on the invariant
		\[
		\xi(x)=\frac{M(x)}{P(x)},\qquad 
		M(x)=\sum_{i=1}^{m_1} a_i x_i,
		\quad 
		P(x)=\sum_{j=1}^{m_2} b_j x_j.
		\]
		In particular, the hypotheses $\sum_{i=1}^{m_1} a_i\neq 0$ and $\prod_{j=1}^{m_2} b_j\neq 0$
		ensure that $\delta>0$ and $\eta>0$.

		By the Cauchy--Schwarz inequality,
		\[
		4\eta\delta-\theta^2
		=
		4\left[
		\Big(\sum_{j=1}^{m_2} b_j^2\Big)\Big(\sum_{i=1}^{m_1} a_i^2\Big)
		-
		\Big(\sum_{i=1}^{m_1} a_i b_i\Big)^2
		\right]
		\ge 0.
		\]
		Moreover, equality holds if and only if the vectors
		$(a_1,\dots,a_{m_1})$ and $(b_1,\dots,b_{m_2})$ are linearly dependent which it is possible to happen if $m_1=m_2$, excluding the case $4\eta\delta-\theta^2=0$. Since, in this case, $\xi(x)$ is a constant function. Consequently, the natural analytic domain of $\xi$ (and hence of $N$) is
		\[
		\Omega_0=\{x\in\mathbb{R}^n:\;P(x)\neq 0\},
		\]
		since the invariant $\xi$ is not defined on the hyperplane $\{P=0\}$.
		
		Considering $4\eta\delta-\theta^2>0$, the lapse function is given by the arctangent function, which is well defined for all $\xi\in\mathbb{R}$.
		Remember, $\arctan(t)\in(-\pi/2,\pi/2)$ for all $t\in\mathbb{R}$, and it follows that
		\[
		k_1-\frac{k\pi}{\sqrt{4\eta\delta-\theta^2}}
		<
		N^{-1}(x)
		<
		k_1+\frac{k\pi}{\sqrt{4\eta\delta-\theta^2}},
		\qquad
		\forall\,x\in\Omega_0.
		\]
		In particular, $N^{-1}$ is uniformly bounded on $\Omega_0$. Therefore,
		\[
		\{x\in\Omega_0:\;N(x)=0\}=\varnothing.
		\]
		In this case, the domain of the solution is the open subset
		\[
		\Omega=\{x\in\Omega_0:\;N(x)>0\},
		\]
		selected solely by the geometric requirement of positivity of the lapse function.
	\end{remark}
	
	\begin{remark}[Uniform equivalence and completeness of Theorem \ref{thm3}]
		Considering that
		\[
		4\eta\delta-\theta^2>0
		\]
		and the lapse function given by the arctangent expression.
		Since $\arctan(t)\in(-\pi/2,\pi/2)$ for all $t\in\mathbb{R}$, we have
		\[
		k_1-\frac{k\pi}{\sqrt{4\eta\delta-\theta^2}}
		<
		N^{-1}(x)
		<
		k_1+\frac{k\pi}{\sqrt{4\eta\delta-\theta^2}},
		\qquad
		\forall x\in\Omega,
		\]
		where $\Omega$ is any open set on which $N$ is defined and positive.
		Defining
		\[
		A=k_1-\frac{k\pi}{\sqrt{4\eta\delta-\theta^2}},
		\qquad
		B=k_1+\frac{k\pi}{\sqrt{4\eta\delta-\theta^2}},
		\]
		we note that $B>0$.
		If, in addition, $A>0$, then $N^{-1}$ is uniformly bounded away from zero.
		
		In this case, the conformal metric
		\[
		\overline g = N^{-\frac{2}{n-2}}\, g
		\]
		is uniformly equivalent to the Euclidean metric $g$, i.e., there exist positive
		constants $c_1,c_2$ such that
		\[
		c_1\, g \;\le\; \overline g \;\le\; c_2\, g
		\quad \text{on } \Omega.
		\]
		Consequently, $(\Omega,g)$ is complete if and only if $(\Omega,\overline g)$ is complete.
	\end{remark}


	\section{Background}
	\label{Sec2}
	
	In what follows, we adopt the following convention for the derivatives of a
	function $F=F(\xi)$, where $\xi:\mathbb{R}^{n}\to\mathbb{R}$ is a smooth
	function depending on the Cartesian coordinates
	$x=(x_1,\ldots,x_n)$.

	\begin{center}
		$\dfrac{d F}{d\xi}=F'$,\,\quad$\dfrac{\partial F}{\partial x_i}=F_{,i}$\quad\mbox{and}\quad
		$\dfrac{\partial^2F}{\partial x_i\partial x_j}=F_{,ij}$.
	\end{center}

	\begin{lemma}\label{cont:lem1}
		Let $(\mathbb{R}^n,g)$, $n\geq 3$, be the Euclidean space with Cartesian coordinates $(x_1,..., x_n)$ and the Euclidean metric $g$. Consider smooth functions $\varphi,\, \psi$ and $N$ and a metric $\overline{g}=g/\varphi^{2}$ such that $\left(\Omega,\,\overline{g},\, N,\,\psi\right)$, $\Omega\subseteq\mathbb{R}^{n}$, is a solution for the electrostatic system. Then, the functions $\varphi,\, \psi$ and $N$ must satisfy
		\begin{equation}\label{23}
			\displaystyle\sum_{k=1}^{n}\left[2(n-1) N\varphi_{,kk} -n(n-1)\frac{N}{\varphi}(\varphi_{,k})^2 - 2\frac{\varphi}{N} (\psi_{,k})^2\right]= \frac{2N}{\varphi}\Lambda.
		\end{equation}
	\end{lemma}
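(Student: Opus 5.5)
The plan is to derive \eqref{23} directly from the scalar curvature identity \eqref{1.4}, $R_{\overline g}=2(|E|^2_{\overline g}+\Lambda)$, which holds for any electrostatic system. I would not use the full Hessian equation \eqref{eq3-def1}, only its trace combined with \eqref{eq1-def1}, which is exactly \eqref{1.4}. The task then reduces to expressing both $R_{\overline g}$ and $|E|^2_{\overline g}$ in Cartesian coordinates in terms of $\varphi$, $N$ and $\psi$.

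\emph{Step 1: the scalar curvature.} Write $\overline g=e^{2f}g$ with $f=-\log\varphi$. The standard conformal transformation formula for a flat background $g$ gives
\[
R_{\overline g}=e^{-2f}\bigl(-2(n-1)\Delta f-(n-1)(n-2)|\nabla f|^2\bigr).
\]
Since $f_{,k}=-\varphi_{,k}/\varphi$ and $f_{,kk}=-\varphi_{,kk}/\varphi+(\varphi_{,k})^2/\varphi^2$, substitution yields
\[
R_{\overline g}=2(n-1)\varphi\sum_k\varphi_{,kk}-n(n-1)\sum_k(\varphi_{,k})^2 .
\]
The coefficient $n(n-1)$ arises as $2(n-1)+(n-1)(n-2)$.

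\emph{Step 2: the electric field.} From $NE^\flat=d\psi$ we get $|E|^2_{\overline g}=N^{-2}|d\psi|^2_{\overline g}$. Since $\overline g^{-1}=\varphi^2 g^{-1}$, this becomes
\[
|E|^2_{\overline g}=\frac{\varphi^2}{N^2}\sum_k(\psi_{,k})^2 .
\]

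\emph{Step 3: assemble.} Insert Steps 1 and 2 into \eqref{1.4}. This gives
\[
2(n-1)\varphi\sum_k\varphi_{,kk}-n(n-1)\sum_k(\varphi_{,k})^2-\frac{2\varphi^2}{N^2}\sum_k(\psi_{,k})^2=2\Lambda .
\]
Multiplying through by the positive function $N/\varphi$ produces exactly \eqref{23}.

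The only step requiring care is Step 1. There one must get the conformal-change formula and its sign conventions right for the choice $\overline g=g/\varphi^2$ rather than $\varphi^2 g$. As a check I would verify the combined coefficient $n(n-1)$ against a simple example, for instance a constant or linear $\varphi$. Everything else is routine substitution.
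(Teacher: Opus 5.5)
Your proposal is correct and follows essentially the same route as the paper. Both start from the traced identity \eqref{1.4}, insert $R_{\overline g}=(n-1)\bigl(2\varphi\Delta_g\varphi-n|\nabla_g\varphi|^2\bigr)$ and $|E|^2_{\overline g}=\varphi^2N^{-2}\sum_k(\psi_{,k})^2$, and multiply by $N/\varphi$. The only difference is that you derive the conformal curvature formula from $\overline g=e^{2f}g$ with $f=-\log\varphi$ instead of citing it, and your coefficient check $n(n-1)=2(n-1)+(n-1)(n-2)$ is right.
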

	\begin{proof}[Proof of Lemma~\ref{cont:lem1}]
		From equation~\eqref{1.4}, it follows that
		\begin{equation*}
			R_{\overline g}
			=
			2\left(
			\frac{|\nabla_{\overline g}\psi|^{2}}{N^{2}}+\Lambda
			\right),
		\end{equation*}
		where $R_{\overline g}$ and $\nabla_{\overline g}$ denote the scalar
		curvature and the gradient with respect to $\overline g$, respectively.
		It is well known that the scalar curvature of $\overline g$
		(cf.~\cite{khunel1988}) is given by
		\begin{equation*}
			R_{\overline g}
			=
			(n-1)\left(
			2\varphi\,\Delta_{g}\varphi
			-
			n|\nabla_{g}\varphi|^{2}
			\right).
		\end{equation*}
		Moreover, if $\psi$ is a smooth function on $(\Omega,\overline g)$, then
		\begin{equation*}
			|\nabla_{\overline g}\psi|^{2}
			=
			\varphi^{2}\sum_{k=1}^{n}(\psi_{,k})^{2}.
		\end{equation*}
		Combining the identities above with equation~\eqref{1.4}, we obtain
		\begin{equation*}
			(n-1)\left(
			2\varphi\,\Delta_{g}\varphi
			-
			n|\nabla_{g}\varphi|^{2}
			\right)
			=
			2\left(
			\frac{|\nabla_{\overline g}\psi|^{2}}{N^{2}}+\Lambda
			\right).
		\end{equation*}
		
		In Cartesian coordinates, this identity becomes
		\begin{equation*}
			\sum_{k=1}^{n}
			\left[
			2(n-1)N\varphi_{,kk}
			-
			n(n-1)\frac{N}{\varphi}(\varphi_{,k})^{2}
			-
			2\frac{\varphi}{N}(\psi_{,k})^{2}
			\right]
			=
			\frac{2N}{\varphi}\Lambda,
		\end{equation*}
		which concludes the proof.
	\end{proof}

	
	Next, we reduce the electrostatic system given in Definition~\ref{def1}
	to a system of partial differential equations involving the functions
	$\psi$, $\varphi$, and $N$, together with their partial derivatives.

	\begin{theorem}\label{theo1}
		Let $\left(\mathbb{R}^{n},g\right)$, $\, n\geq 3$, be the Euclidean space with Cartesian coordinates $x=\left(x_{1},...\, ,x_{n}\right)$ and Euclidean metric components $g$. Then, there exists a metric $\overline{g}=g/ \varphi^{2}$ such that $\left(\Omega,\,\overline{g},\,N,\,\psi\right)$, $\Omega\subseteq\mathbb{R}^{n}$, is a solution for the electrostatic system if and only if the smooth functions $\varphi$, $\psi$ and $N$ satisfy 
		
		\begin{equation}\label{eq1-theo1}
			(n-2)N\varphi_{,ij}-\varphi N_{,ij}-\varphi_{,i}N_{,j}-\varphi_{,j}N_{,i}+2\frac{\varphi}{N}\psi_{,i}\psi_{,j}=0,\hspace{0,8cm} \mathrm{for} \hspace{0,2cm} i\neq j;
		\end{equation}
		and for each $i$
		\begin{multline}\label{eq2-theo1}
			\varphi\left[(n-2)N\varphi_{,ii}-\varphi N_{,ii}-2\varphi_{,i}N_{,i}+2\frac{\varphi}{N}\left(\psi_{,i}\right)^2 \right]\\
			+\sum_{k=1}^{n}\left[\varphi\varphi_{,kk}N+\varphi\varphi_{,k}N_{,k}-(n-1)N\left(\varphi_{,k}\right)^2-\frac{2}{(n-1)N}\varphi^2\left(\psi_{,k}\right)^2\right]=\frac{2\Lambda}{n-1}N;
		\end{multline}
		
		\begin{equation}\label{eq3-theo1}
			\sum_{k=1}^{n}\left[N\varphi \psi_{,kk}-(n-2)N\varphi_{,k}\psi_{,k}-\varphi\psi_{,k}N_{,k}\right]=0;
		\end{equation}
		
		\begin{equation}\label{eq4-theo1}
			\displaystyle\sum_{k=1}^{n}\left[\varphi^2 NN_{,kk} -(n-2)\varphi\varphi_{,k}NN_{,k}-\frac{2(n-2)}{n-1}\varphi^2 \left(\psi_{,k}\right)^2\right]=-\frac{2\Lambda}{n-1} N^2.
		\end{equation}
	\end{theorem}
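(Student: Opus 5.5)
Our plan is to rewrite each equation of Definition~\ref{def1} in Cartesian coordinates using the standard conformal-change formulas for $\overline g=g/\varphi^{2}$, and then multiply by suitable powers of $\varphi$ and $N$ to clear denominators. We will use the potential $\psi$ with $NE^\flat=d\psi$, so that $E^\flat=d\psi/N$ and $|E|^{2}_{\overline g}=\varphi^{2}|\nabla_g\psi|^{2}/N^{2}$. Write $\overline g=e^{2f}g$ with $f=-\log\varphi$. Then $f_{,i}=-\varphi_{,i}/\varphi$ and $f_{,ij}=-\varphi_{,ij}/\varphi+\varphi_{,i}\varphi_{,j}/\varphi^{2}$. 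The Christoffel symbols are $\overline\Gamma^{k}_{ij}=\delta_{ik}f_{,j}+\delta_{jk}f_{,i}-\delta_{ij}f_{,k}$. This gives the Hessian
\[
(\nabla^{2}_{\overline g}N)_{ij}=N_{,ij}+\frac{\varphi_{,i}N_{,j}+\varphi_{,j}N_{,i}}{\varphi}-\delta_{ij}\sum_k\frac{\varphi_{,k}N_{,k}}{\varphi}.
\]
For the Ricci tensor we use the known formula (cf.~\cite{khunel1988})
\[
\operatorname{Ric}_{\overline g}=\frac{1}{\varphi^{2}}\Big[(n-2)\varphi\nabla^{2}_g\varphi+\big(\varphi\Delta_g\varphi-(n-1)|\nabla_g\varphi|^{2}\big)g\Big].
\]
The Laplacian is $\Delta_{\overline g}N=\varphi^{2}\Delta_gN-(n-2)\varphi\langle\nabla_g\varphi,\nabla_gN\rangle$. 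The divergence is $\operatorname{div}_{\overline g}E=\varphi^{n}\operatorname{div}_g(\varphi^{-n}E)$, where $E=\nabla_{\overline g}\psi/N=\varphi^{2}\nabla_g\psi/N$ in coordinates.

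With these formulas the steps are as follows.

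\textbf{Step 1.} Substitute into \eqref{eq1-def1}. Multiplying by $N$ should give \eqref{eq4-theo1} directly.

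\textbf{Step 2.} Expand $\operatorname{div}_g(\varphi^{2-n}N^{-1}\nabla_g\psi)=0$. Multiplying by $\varphi^{n-1}N^{2}$ yields \eqref{eq3-theo1}. The closedness condition $d(NE^\flat)=d(d\psi)=0$ holds automatically. Conversely, any $\psi$ defines $E$ satisfying the second condition, and this is what gives the ``if and only if'' for \eqref{eq2-def1}.

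\textbf{Step 3.} Take the $i\neq j$ components of \eqref{eq3-def1}. The $\delta_{ij}$ terms drop out, leaving
\[
N_{,ij}+\frac{\varphi_{,i}N_{,j}+\varphi_{,j}N_{,i}}{\varphi}=N\Big[\frac{(n-2)\varphi_{,ij}}{\varphi}+\frac{2\psi_{,i}\psi_{,j}}{N^{2}}\Big].
\]
Multiplying by $\varphi$ and rearranging gives \eqref{eq1-theo1}.

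\textbf{Step 4.} Take the diagonal components of \eqref{eq3-def1}. Here the trace terms $\delta_{ii}\sum_k(\cdots)$ from the Hessian, from the Ricci tensor, and from $-\frac{2\Lambda}{n-1}\overline g-\frac{2}{n-1}|E|^{2}\overline g$ (which carry the factor $\varphi^{-2}$) must be collected into the sum in \eqref{eq2-theo1}. Multiplying by $\varphi^{2}$ should produce exactly \eqref{eq2-theo1}.

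For the converse, each step is an equivalence, since we only multiplied by the nonvanishing factors $\varphi$ and $N$. Hence \eqref{eq1-theo1}--\eqref{eq4-theo1} together recover all components of Definition~\ref{def1}.

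The main obstacle is Step 4, specifically the bookkeeping of signs and coefficients in the trace part. We expect to have to use the trace identity \eqref{1.4} (equivalently Lemma~\ref{cont:lem1}) to trade the $\Lambda$ and $|E|^{2}$ terms. Some combination with \eqref{eq4-theo1} may be needed to match the exact coefficients $-\frac{2}{(n-1)N}\varphi^{2}(\psi_{,k})^{2}$ and $\frac{2\Lambda}{n-1}N$ appearing on the right. If the direct substitution does not match, we will add a multiple of \eqref{eq4-theo1} to the diagonal equation. Because \eqref{eq4-theo1} is part of the system, this does not affect the equivalence.
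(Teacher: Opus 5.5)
Your proposal is correct and follows essentially the same route as the paper: substitute the conformal formulas for the Christoffel symbols, Hessian, Ricci tensor and Laplacian of $\overline g=g/\varphi^{2}$ into Definition~\ref{def1}, split \eqref{eq3-def1} into off-diagonal and diagonal components, and clear the nonvanishing factors $\varphi$ and $N$. Your volume-form divergence $\operatorname{div}_{\overline g}E=\varphi^{n}\operatorname{div}_g(\varphi^{-n}E)$ is only a cosmetic variant of the paper's $N\Delta_{\overline g}\psi-\overline g(\nabla_{\overline g}\psi,\nabla_{\overline g}N)=0$, and the fallback in Step 4 is unnecessary: multiplying the $ii$-component of \eqref{eq3-def1} by $\varphi^{2}$ and collecting the $\delta_{ii}$ terms already gives \eqref{eq2-theo1} verbatim, without \eqref{1.4} or \eqref{eq4-theo1}.
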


	\begin{proof}[Proof of Theorem~\ref{theo1}]
		Recall the expressions for the Ricci tensor and the scalar curvature of a
		conformal metric of the form $\overline g=g/\varphi^{2}$
		(cf.~\cite{khunel1988}):
		\begin{equation*}
			\operatorname{Ric}_{\overline g}
			=
			\frac{1}{\varphi^{2}}
			\Bigl(
			(n-2)\varphi\,\nabla^{2}_{g}\varphi
			+
			\bigl[
			\varphi\Delta_{g}\varphi-(n-1)|\nabla_{g}\varphi|^{2}
			\bigr]g
			\Bigr)
		\end{equation*}
		and
		\begin{equation*}
			R_{\overline g}
			=
			(n-1)\bigl(
			2\varphi\Delta_{g}\varphi
			-
			n|\nabla_{g}\varphi|^{2}
			\bigr).
		\end{equation*}
		
		From equation~\eqref{eq3-def1}, we obtain
		\begin{equation*}
			N\operatorname{Ric}_{\overline g}
			=
			\nabla^{2}_{\overline g}N
			-
			2\frac{\nabla_{\overline g}\psi\otimes\nabla_{\overline g}\psi}{N}
			+
			\frac{2}{n-1}
			\left(
			\frac{|\nabla_{\overline g}\psi|^{2}}{N}
			+
			N\Lambda
			\right)\overline g,
		\end{equation*}
		which is equivalent to
		\begin{multline}\label{A}
			(n-2)N\varphi(\nabla^{2}_{g}\varphi)_{ij}
			+
			N\bigl[
			\varphi\Delta_{g}\varphi
			-
			(n-1)|\nabla_{g}\varphi|^{2}
			\bigr]\delta_{ij} \\
			=
			\varphi^{2}(\nabla^{2}_{\overline g}N)_{ij}
			+
			\frac{2}{n-1}
			\left(
			\frac{|\nabla_{\overline g}\psi|^{2}}{N}
			+
			N\Lambda
			\right)\delta_{ij}
			-
			\frac{2\varphi^{2}}{N}
			(\nabla_{\overline g}\psi\otimes\nabla_{\overline g}\psi)_{ij}.
		\end{multline}
		
		The Hessian of $N$ with respect to $\overline g$ is given by
		\begin{equation*}
			(\nabla^{2}_{\overline g}N)_{ij}
			=
			N_{,ij}
			-
			\sum_{k=1}^{n}\overline\Gamma^{k}_{ij}N_{,k},
		\end{equation*}
		where $\overline\Gamma^{k}_{ij}$ denote the Christoffel symbols of
		$\overline g$.
		For distinct indices $i,j,k$, one has
		\begin{equation}\label{cristofel}
			\overline\Gamma^{k}_{ij}=0,
			\qquad
			\overline\Gamma^{i}_{ij}=-\frac{\varphi_{,j}}{\varphi},
			\qquad
			\overline\Gamma^{k}_{ii}=\frac{\varphi_{,k}}{\varphi},
			\qquad
			\overline\Gamma^{i}_{ii}=-\frac{\varphi_{,i}}{\varphi}.
		\end{equation}
		Therefore,
		\begin{equation}\label{10}
			(\nabla^{2}_{\overline g}N)_{ij}
			=
			N_{,ij}
			+
			\frac{\varphi_{,j}N_{,i}}{\varphi}
			+
			\frac{\varphi_{,i}N_{,j}}{\varphi},
			\qquad i\neq j,
		\end{equation}
		and
		\begin{equation}\label{11}
			(\nabla^{2}_{\overline g}N)_{ii}
			=
			N_{,ii}
			+
			2\frac{\varphi_{,i}N_{,i}}{\varphi}
			-
			\frac{1}{\varphi}\sum_{k=1}^{n}\varphi_{,k}N_{,k},
			\qquad \text{for each } i.
		\end{equation}
		
		Moreover,
		\begin{equation}\label{14}
			|\nabla_{g}\varphi|^{2}
			=
			\sum_{k=1}^{n}(\varphi_{,k})^{2},
			\qquad
			\Delta_{g}\varphi
			=
			\sum_{k=1}^{n}\varphi_{,kk},
		\end{equation}
		and
		\begin{equation}\label{014}
			(\nabla^{2}_{g}\varphi)_{ij}
			=
			\varphi_{,ij},
			\qquad
			(\nabla_{\overline g}\psi\otimes\nabla_{\overline g}\psi)_{ij}
			=
			\psi_{,i}\psi_{,j}.
		\end{equation}
		
		Assume first that $i\neq j$ in~\eqref{A}.
		Then
		\begin{equation}\label{15}
			(n-2)N(\nabla^{2}_{g}\varphi)_{ij}
			-
			\varphi(\nabla^{2}_{\overline g}N)_{ij}
			+
			2\frac{\varphi}{N}\psi_{,i}\psi_{,j}
			=
			0.
		\end{equation}
		Substituting~\eqref{10} and~\eqref{014} into~\eqref{15}, we obtain
		\begin{equation*}
			(n-2)N\varphi_{,ij}
			-
			\varphi N_{,ij}
			-
			\varphi_{,j}N_{,i}
			-
			\varphi_{,i}N_{,j}
			+
			2\frac{\varphi}{N}\psi_{,i}\psi_{,j}
			=
			0.
		\end{equation*}
		
		If instead $i=j$ in~\eqref{A}, then~\eqref{014} yields
		\begin{multline}\label{13}
			(n-2)N\varphi(\nabla^{2}_{g}\varphi)_{ii}
			+
			N\bigl[
			\varphi\Delta_{g}\varphi
			-
			(n-1)|\nabla_{g}\varphi|^{2}
			\bigr] \\
			=
			\varphi^{2}(\nabla^{2}_{\overline g}N)_{ii}
			+
			\frac{2}{n-1}
			\left(
			\frac{|\nabla_{\overline g}\psi|^{2}}{N}
			+
			N\Lambda
			\right)
			-
			\frac{2\varphi^{2}}{N}(\psi_{,i})^{2}.
		\end{multline}
		Combining~\eqref{11}, \eqref{14}, and~\eqref{014} with~\eqref{13}, we obtain
		\begin{multline*}
			\varphi\Bigl[
			(n-2)N\varphi_{,ii}
			-
			\varphi N_{,ii}
			-
			2\varphi_{,i}N_{,i}
			+
			2\frac{\varphi}{N}(\psi_{,i})^{2}
			\Bigr] \\
			+
			\sum_{k=1}^{n}
			\Bigl[
			\varphi\varphi_{,kk}N
			+
			\varphi\varphi_{,k}N_{,k}
			-
			(n-1)N(\varphi_{,k})^{2}
			-
			\frac{2\varphi^{2}}{(n-1)N}(\psi_{,k})^{2}
			-
			\frac{2\Lambda}{(n-1)n}N
			\Bigr]
			=
			0.
		\end{multline*}
		
		Equation~\eqref{eq2-def1}, namely
		$\operatorname{div}_{\overline g}(\nabla_{\overline g}\psi/N)=0$,
		is equivalent to
		\begin{equation}\label{16}
			N\Delta_{\overline g}\psi
			-
			\overline g(\nabla_{\overline g}\psi,\nabla_{\overline g}N)
			=
			0.
		\end{equation}
		A direct computation gives
		\begin{equation}\label{18}
			\overline g(\nabla_{\overline g}\psi,\nabla_{\overline g}N)
			=
			\varphi^{2}\sum_{i=1}^{n}\psi_{,i}N_{,i}.
		\end{equation}
		Since the Laplacian of a smooth function $F$ with respect to $\overline g$
		is given by
		\begin{equation*}
			\Delta_{\overline g}F
			=
			\sum_{i=1}^{n}
			\left[
			\varphi^{2}F_{,ii}
			-
			(n-2)\varphi\varphi_{,i}F_{,i}
			\right],
		\end{equation*}
		it follows from~\eqref{16} and~\eqref{18} that
		\begin{equation*}
			\sum_{k=1}^{n}
			\bigl\{
			N\varphi\psi_{,kk}
			-
			(n-2)N\varphi_{,k}\psi_{,k}
			-
			\varphi\psi_{,k}N_{,k}
			\bigr\}
			=
			0.
		\end{equation*}
		
		Finally, equation~\eqref{eq1-def1} yields
		\begin{equation*}
			\Delta_{\overline g}N
			=
			2N\left(
			\frac{n-2}{n-1}|E|^{2}
			-
			\frac{1}{n-1}\Lambda
			\right),
		\end{equation*}
		where $E=\nabla_{\overline g}\psi/N$.
		Therefore,
		\begin{equation*}
			\sum_{k=1}^{n}
			\left[
			\varphi^{2}NN_{,kk}
			-
			(n-2)\varphi\varphi_{,k}NN_{,k}
			-
			\frac{2(n-2)}{n-1}\varphi^{2}(\psi_{,k})^{2}
			\right]
			=
			-\frac{2\Lambda}{n-1}N^{2}.
		\end{equation*}
		
		The converse implication follows by a straightforward computation.
	\end{proof}


	\section{Proof of the Main Result}

	The following result adapts techniques developed in \cite{Be} and
	\cite{Santos} to the electrostatic system.
	Its purpose is to characterize the most general smooth functions
	$\xi:\mathbb{R}^{n}\to\mathbb{R}$, with $\xi=\xi(x_1,\ldots,x_n)$, for which
	the partial differential equations
	\eqref{eq1-theo1}, \eqref{eq2-theo1}, \eqref{eq3-theo1}, and
	\eqref{eq4-theo1} can be reduced to a system of ordinary differential
	equations.


	\begin{proof}[{\bf Proof of Theorem \ref{ansatz}}]
		Assume that $\varphi(\xi)$, $\psi(\xi)$, and $N(\xi)$ are functions of $\xi$. Then, for instance,
		\begin{equation}\label{d1}
			\displaystyle\varphi_{,i}=\varphi'\xi_{,i};\hspace{0.5cm}\varphi_{,ii}=\varphi''\left(\xi_{,i}\right)^2+\varphi'\xi_{,ii};\hspace{0.5cm}\varphi_{,ij}=\varphi''\xi_{,i}\xi_{,j}+\varphi'\xi_{,ij}.
		\end{equation}
		Thus, from \eqref{eq1-theo1}, 
		\begin{multline}\label{eq1thm2}
			\displaystyle\left[(n-2)\varphi''N-\varphi N''-2\varphi'N'+2\frac{\varphi}{N}\left(\psi'\right)^2\right]\xi_{,i}\xi_{,j}\\ +\left[(n-2)\varphi'N-\varphi N'\right]\xi_{,ij}=0.
		\end{multline}

		The identity \eqref{eq1thm2} must be analyzed in two distinct scenarios. Unlike \cite{Be,Santos}, where $(n-2)\varphi'N-\varphi N'\neq0$ is a trivial fact, it is well-known that the Majumdar-Papapetrou multi-centered extremal solution for the electrovacuum system satisfies $(n-2)\varphi'N-\varphi N'=0$; see \eqref{MPmetric}. Therefore, we will consider these two cases separately.  
		
		First, consider $(n-2)\varphi'N-\varphi N'=0$. Since $\xi_{,i}\xi_{,j}\neq 0$ is a subset of $\mathbb{R}^n\setminus\{0\}$, for some $i\neq j$, from \eqref{eq1thm2},  $(n-2)\varphi''N-\varphi N''-2\varphi'N'+2\dfrac{\varphi}{N}\left(\psi'\right)^2=0$. Taking the derivative of $(n-2)\varphi'N-\varphi N'=0$ yields
		\begin{eqnarray}\label{derivative}
			\displaystyle (n-2)N\varphi''-\varphi N''-2N'\varphi'=-(n-1)\varphi' N'.
		\end{eqnarray}
		Combining $(n-2)\varphi''N-\varphi N''-2\varphi'N'+2\dfrac{\varphi}{N}\left(\psi'\right)^2=0$ with the above identity, we conclude that 
		\begin{eqnarray}\label{eq2thm2}
			\displaystyle \frac{\varphi'}{\varphi}=\frac{1}{n-2}\frac{N'}{N}, \hspace{0,5cm}\text{and}\hspace{0,5cm}\left(\psi'\right)^2=\frac{n-1}{2(n-2)}\left(N'\right)^2.
		\end{eqnarray}
		And this is strong evidence that we are dealing with the Majumdar-Papapetrou class of solutions.

		Moving on, from \eqref{eq2-theo1} and \eqref{eq2thm2}, we have
		\begin{multline}\label{eq3thm2}
			\left[\varphi\varphi''N+\varphi\varphi'N'-(n-1)N(\varphi')^2-\frac{2}{(n-1)N}\varphi^2(\psi')^2\right]\left|\nabla_{g}\xi\right|^2\\ +\varphi\left[-(n-1)N'\varphi'+\frac{2\varphi}{N}\left(\psi'\right)^2\right]\left(\xi_{,i}\right)^2
			+\varphi\varphi'N\Delta_{g}\xi=\frac{2\Lambda}{n-1} N.
		\end{multline}
		Substituting the equation \eqref{derivative} into $(n-2)\varphi''N-\varphi N''-2\varphi'N'+2\dfrac{\varphi}{N}\left(\psi'\right)^2=0$ leads us to 
		\begin{eqnarray}\label{eq4thm2}
			\displaystyle N'\varphi'-\frac{2\varphi}{(n-1)N}\left(\psi'\right)^2=0,
		\end{eqnarray}
		and
		\begin{multline*}
			\varphi\varphi''N+\varphi\varphi'N'-(n-1)N(\varphi')^2-\frac{2}{(n-1)N}\varphi^2(\psi')^2\\
			=\varphi\varphi''N-(n-1)N(\varphi')^2+\varphi\varphi'N'-\frac{2}{(n-1)N}\varphi^2(\psi')^2\\
			=N\left[\varphi\varphi''-(n-1)\left(\varphi'\right)^2\right]+\varphi\left(\varphi'N'-\frac{2}{(n-1)N}\varphi(\psi')^2\right).
		\end{multline*}
		From \eqref{eq4thm2}, we get 
		\begin{eqnarray}
			\varphi\varphi''N+\varphi\varphi'N'-(n-1)N(\varphi')^2-\frac{2}{(n-1)N}\varphi^2(\psi')^2=N\left[\varphi\varphi''-(n-1)\left(\varphi'\right)^2\right].\nonumber
		\end{eqnarray}
		In this case, \eqref{eq3thm2} can be rewritten as
		\begin{eqnarray}\label{eq5thm2}
			\displaystyle\left[\varphi\varphi''-(n-1)\left(\varphi'\right)^2\right]\left|\nabla_{g}\xi\right|^2+\varphi\varphi'\Delta_g\xi=\frac{2\Lambda}{n-1}.
		\end{eqnarray}
		
		On the other hand, by Lemma \ref{cont:lem1}, we can verify that  
		\begin{eqnarray*}
			\displaystyle\left[2(n-1)\varphi''-n(n-1)\frac{\left(\varphi'\right)^2}{\varphi}-\frac{2\varphi}{N^2}\left(\psi'\right)^2\right]\left|\nabla_{g}\xi\right|^2+2(n-1)\varphi'\Delta_g\xi=\frac{2\Lambda}{\varphi}.
		\end{eqnarray*}
		From \eqref{eq2thm2}, we obtain
		\begin{eqnarray}\label{eq6thm2}
			\displaystyle\left[\varphi\varphi''-(n-1)\left(\varphi'\right)^2\right]\left|\nabla_{g}\xi\right|^2+\varphi\varphi'\Delta_g\xi=\frac{\Lambda}{n-1}.
		\end{eqnarray}
		Combining \eqref{eq5thm2} and \eqref{eq6thm2} yields $$\Lambda=0.$$
		This is also expected, since the static Majumdar-Papapetrou class of solutions has a null cosmological constant.

		From equation \eqref{eq1thm2}, we have $$(n-2)\varphi'' N - \varphi N'' - 2 \varphi' N' + 2 \frac{\varphi}{N} \left(\psi'\right)^2 = 0.$$ Moreover, from \eqref{eq2thm2} we can see that this equation is trivially satisfied. Combining \eqref{eq2-theo1} and \eqref{d1}, we obtain
		\begin{eqnarray}
			\left[\varphi\varphi''N+\varphi\varphi'N'-(n-1)\left(\varphi'\right)^2N-\frac{2}{(n-1)N}\varphi^2\left(\psi'\right)^2\right]\sum_{k=1}^{n}\left(\xi_{,k}\right)^2+\varphi\varphi'N\sum_{k=1}^{n}\xi_{,kk}=0. \nonumber
		\end{eqnarray}
		Similarly, from \eqref{eq3-theo1} and \eqref{d1}, we will see that
		\begin{eqnarray}
			\left[\varphi N\psi''-(n-2)\varphi'N\psi'-\varphi N'\psi'\right]\sum_{k=1}^{n}\left(\xi_{,k}\right)^2+\varphi N\psi'\sum_{k=1}^{n}\xi_{,kk}=0, \nonumber
		\end{eqnarray}
		Finally, from \eqref{eq4-theo1} and \eqref{d1},
		\begin{eqnarray}
			\left[\varphi N''-(n-2)\varphi'N'-\frac{2(n-2)}{(n-1)N}\varphi\left(\psi'\right)^2\right]\sum_{k=1}^{n}\left(\xi_{,k}\right)^2+\varphi N'\sum_{k=1}^{n}\xi_{,kk}=0.\nonumber
		\end{eqnarray}
		
		Therefore, we have the system 
		\begin{itemize}
			\item [(1)] $ \displaystyle \left[\varphi\varphi''N+\varphi\varphi'N'-(n-1)\left(\varphi'\right)^2N-\frac{2}{(n-1)N}\varphi^2\left(\psi'\right)^2\right]\vert\nabla_{g}\xi\vert^2+\varphi\varphi'N\Delta_g\xi=0$,
			\vspace{0.3cm}
			\item [(2)] $\displaystyle\left[\varphi N\psi''-(n-2)\varphi'N\psi'-\varphi N'\psi'\right]\vert\nabla_{g}\xi\vert^2+\varphi N\psi'\Delta_g\xi=0$,
			\vspace{0.3cm}
			\item [(3)] $\displaystyle\left[\varphi N''-(n-2)\varphi'N'-\frac{2(n-2)}{(n-1)N}\varphi\left(\psi'\right)^2\right]\vert\nabla_{g}\xi\vert^2+\varphi N'\Delta_g\xi=0$.
		\end{itemize}
		We claim that (1), (2), and (3) are equivalent. In fact, from 
		\begin{eqnarray*}\label{remember-eq2thm2}
			\displaystyle \frac{\varphi'}{\varphi}=\frac{1}{(n-2)}\frac{N'}{N}, \hspace{0,5cm}\text{and}\hspace{0,5cm}\left(\psi'\right)^2=\frac{(n-1)}{2(n-2)}\left(N'\right)^2
		\end{eqnarray*}
		we can see that (1), (2), and (3) holds if and only if
		\begin{eqnarray}
			\displaystyle\frac{N''}{N'}-2\frac{N'}{N}=-\frac{\Delta_g\xi}{\vert\nabla_{g}\xi\vert^2}.
			\nonumber
		\end{eqnarray}
		

		In fact, consider that
		\begin{eqnarray}\label{eq2-eqv}
			\frac{\varphi'}{\varphi}=\frac{1}{n-2}\frac{N'}{N},
			\qquad
			(\psi')^{2}=\frac{n-1}{2(n-2)}(N')^{2}.
		\end{eqnarray}
		Taking the derivative of $(n-2)\varphi' N-\varphi N'=0$ yields
		\begin{eqnarray}\label{eq1-eqv}
			(n-2)N\varphi''-\varphi N''-2N'\varphi'=-(n-1)\varphi' N'.
		\end{eqnarray}
		Note that from a straightforward computation using (1), 
		\begin{multline*}
			\left[\varphi\varphi''N+\varphi\varphi' N'-(n-1)(\varphi')^{2}N
			-\frac{2}{(n-1)N}\varphi^{2}(\varphi')^{2}
			\right]\vert\nabla_{g}\xi\vert^{2}+\varphi\varphi' N\Delta_g\xi\\
			=\frac{\varphi^{2}}{n-2}\Bigg\{\Bigg[
			(n-2)\frac{\varphi''}{\varphi}N+(n-2)\frac{\varphi'}{\varphi}N'
			-\frac{(n-1)}{(n-2)}\left((n-2)\dfrac{\varphi'}{\varphi}\right)^{2}N
			-\dfrac{2(n-2)}{(n-1)}N(\varphi')^{2}
			\Bigg]\vert\nabla_{g}\xi\vert^{2}\\
			\hspace{4cm}
			+(n-2)\dfrac{\varphi'}{\varphi}N\Delta_g\xi
			\Bigg\}.
		\end{multline*}
		Combining the above equation with \eqref{eq2-eqv} and \eqref{eq1-eqv} leads us to
		\begin{multline*}
			\left[\varphi\varphi''N+\varphi\varphi' N'-(n-1)(\varphi')^{2}N-\frac{2}{(n-1)N}\varphi^{2}(\psi')^{2}\right]\vert\nabla_{g}\xi\vert^{2}
			+\varphi\varphi' N\Delta_g\xi\\
			=\dfrac{\varphi^{2}N'}{n-2}\left\{\left[\frac{N''}{N'}-2\frac{N'}{N}\right]\vert\nabla_{g}\xi\vert^{2}
			+\Delta_g\xi\right\}.
		\end{multline*}
		Thus,
		\begin{eqnarray}
			\left[\varphi\varphi''N+\varphi\varphi' N'-(n-1)(\varphi')^{2}N-\frac{2}{(n-1)N}\varphi^{2}(\psi')^{2}\right]\vert\nabla_{g}\xi\vert^{2}
			+\varphi\varphi' N\Delta_g\xi=0.\nonumber
		\end{eqnarray}
		if and only if 
		\begin{eqnarray}
			\frac{N''}{N'}-2\frac{N'}{N}=-\frac{\Delta_g\xi}{\vert\nabla_{g}\xi\vert^2}.\nonumber
		\end{eqnarray}

		Let us consider the equation (2); note that $(\psi')^{2}=\dfrac{n-1}{2(n-2)}(N')^{2}$. consequently, $\psi'=AN'$ and $\psi''=AN''$, where $A=\pm\sqrt{\dfrac{n-1}{2(n-2)}}$. So, 
		\begin{multline*}
			\displaystyle\left[\varphi N\psi''-(n-2)\varphi'N\psi'-\varphi N'\psi'\right]\vert\nabla_{g}\xi\vert^2+\varphi N\psi'\Delta_g\xi\\
			= \left[\varphi NAN''-A(n-2)\varphi'NN'-\varphi A(N')^2\right]\vert\nabla_{g}\xi\vert^2+\varphi ANN'\Delta_g\xi,
		\end{multline*}
		i.e.,
		\begin{multline*}
			\displaystyle\left[\varphi N\psi''-(n-2)\varphi'N\psi'-\varphi N'\psi'\right]\vert\nabla_{g}\xi\vert^2+\varphi N\psi'\Delta_g\xi\\
			=A\varphi NN'\left\{\left[\frac{N''}{N'}-(n-2)\frac{\varphi'}{\varphi}-\frac{N'}{N}\right]\vert\nabla_{g}\xi\vert^2+\Delta_g\xi\right\}.
		\end{multline*}
		Therefore,
		\begin{eqnarray}
			\displaystyle\left[\varphi N\psi''-(n-2)\varphi'N\psi'-\varphi N'\psi'\right]\vert\nabla_{g}\xi\vert^2+\varphi N\psi'\Delta_g\xi=0.\nonumber
		\end{eqnarray} 
		if and only if 
		\begin{eqnarray}
			\frac{N''}{N'}-2\frac{N'}{N}=-\frac{\Delta_g\xi}{\vert\nabla_{g}\xi\vert^2},
		\end{eqnarray}
		where we used that $(n-2)\dfrac{\varphi'}{\varphi}=\dfrac{N'}{N}$.
		
		Finally, for equation (3), note that 
		\begin{multline*}
			\displaystyle\left[\varphi N''-(n-2)\varphi'N'-\frac{2(n-2)}{(n-1)N}\varphi\left(\psi'\right)^2\right]\vert\nabla_{g}\xi\vert^2+\varphi N'\Delta_g\xi\\
			= \varphi N'\left\{\left[\frac{N''}{N'}-(n-2)\frac{\varphi'}{\varphi}-\frac{2(n-2)}{(n-1)N}\frac{\left(\psi'\right)^2}{N'}\right]\vert\nabla_{g}\xi\vert^2+\Delta_g\xi\right\}.
		\end{multline*}
		By equation \eqref{eq2-eqv}, we can see that
		\begin{eqnarray*}
			\displaystyle\left[\varphi N''-(n-2)\varphi'N'-\frac{2(n-2)}{(n-1)N}\varphi\left(\psi'\right)^2\right]\vert\nabla_{g}\xi\vert^2+\varphi N'\Delta_g\xi=\varphi N'\left\{\left[\frac{N''}{N'}-2\frac{N'}{N}\right]\vert\nabla_{g}\xi\vert^2+\Delta_g\xi\right\}.
		\end{eqnarray*}
		It follows that
		\begin{eqnarray}
			\displaystyle\left[\varphi N''-(n-2)\varphi'N'-\frac{2(n-2)}{(n-1)N}\varphi\left(\psi'\right)^2\right]\vert\nabla_{g}\xi\vert^2+\varphi N'\Delta_g\xi=0\nonumber
		\end{eqnarray}
		if and only if 
		\begin{eqnarray*}
			\frac{N''}{N'}-2\frac{N'}{N}=-\frac{\Delta_g\xi}{\vert\nabla_{g}\xi\vert^2}.
		\end{eqnarray*}

		Now, consider $\xi_{,i}\xi_{,j}\neq 0$ in an open subset $\mathbb{R}^n\setminus\{0\}$, for some $i\neq j$, such that $(n-2)\varphi'N-\varphi N'\neq0$. From \eqref{eq1thm2}, we can see that 
		\begin{equation}\label{4}
			\frac{\xi_{,ij}}{\xi_{,i}\xi_{,j}}=H(\xi),
		\end{equation}
		for same smooth function $H(\xi) = 	\dfrac{\left[(n-2)\varphi''N-\varphi N''-2\varphi'N'+2\frac{\varphi}{N}\left(\psi'\right)^2\right]}{\left[(n-2)\varphi'N-\varphi N'\right]}$. By integration, we can infer
		\begin{equation}
			\xi_{,i}=exp\left\{\int H(\xi)d\xi+H_i\left(\tilde{x_j}\right)\right\},
		\end{equation}
		where the variable $x_j$ is arbitrary and $H_i\left(\tilde{x_j}\right)$ does not depend on the variable $x_j$, i.e., $\tilde{x_j}=\left(x_1, ..., x_{j-1},x_{j+1}, ...,x_n\right)$. This process can be considered for all $i\neq j$. Writing  $L\left(\xi\right)=exp\left\{\int H\left(\xi\right) d\xi\right\}$ and $L_i\left(x_i\right)=exp\left\{H_{i}\left(\tilde{x_j}\right)\right\}$, we have
		\begin{equation}\label{ansatz:eq18}
			\xi_{,i}=L\left(\xi\right)L_i\left(x_i\right).
		\end{equation}
		
		Multiplying $-\dfrac{\varphi}{n}$ in Lemma \ref{cont:lem1} yields
		\begin{eqnarray}
			-\displaystyle\sum_{k=1}^{n}\left[\frac{2(n-1)}{n} N\varphi\varphi_{,kk} -(n-1)N(\varphi_{,k})^2 - \frac{2\varphi^2}{nN} (\psi_{,k})^2\right]= -\frac{2N}{n}\Lambda.\nonumber
		\end{eqnarray}	
		Combining the above identity with \eqref{eq2-theo1} leads us to
		\begin{multline}\label{eq1-poof-red}
			\varphi\left[(n-2)N\varphi_{,ii}-\varphi N_{,ii}-2\varphi_{,i}N_{,i}+2\frac{\varphi}{N}\left(\psi_{,i}\right)^2 \right]\\
			+\sum_{k=1}^{n}\left[\frac{(2-n)}{n}N\varphi\varphi_{,kk}+\varphi\varphi_{,k}N_{,k}-\frac{2}{(n-1)n}\frac{\varphi^2}{N}\left(\psi_{,k}\right)^2\right]=\frac{2\Lambda}{n(n-1)}N.
		\end{multline}
		
		Multiple \eqref{eq4-theo1} by $\dfrac{1}{nN}$, i.e., 
		\begin{equation}\label{eq2-poof-red}
			\displaystyle\sum_{k=1}^{n}\left[\frac{\varphi^2 N_{,kk}}{n} -\frac{(n-2)}{n}\varphi\varphi_{,k}N_{,k}-\frac{2(n-2)}{n(n-1)}\frac{\varphi^2}{N} \left(\psi_{,k}\right)^2\right]=-\frac{2\Lambda}{n(n-1)} N.
		\end{equation}	
		Combining \eqref{eq1-poof-red} and \eqref{eq2-poof-red} to obtain
		\begin{multline}\label{eq3-poof-red}
			\left[(n-2)N\varphi_{,ii}-\varphi N_{,ii}-2\varphi_{,i}N_{,i}+2\frac{\varphi}{N}\left(\psi_{,i}\right)^2 \right]\nonumber\\
			-\frac{1}{n}\sum_{k=1}^{n}\left[(n-2)N\varphi_{,kk}-\varphi N_{,kk}-2\varphi_{,k}N_{,k}+2\frac{\varphi}{N}\left(\psi_{,k}\right)^2\right]=0.
		\end{multline}
		Using \eqref{d1}, the above equation becomes
		\begin{multline*}
			\displaystyle\left[(n-2)\varphi''N-\varphi N''-2\varphi'N'+2\frac{\varphi}{N}\left(\psi'\right)^2\right]\left(\xi_{,i}^2-\frac{1}{n}\sum_{k=1}^{n}\xi_{,k}^2\right)\nonumber\\
			+\left[(n-2)\varphi'N-\varphi N'\right]\left(\xi_{,i i}-\frac{1}{n}\sum_{k=1}^{n}\xi_{,k k}\right)=0.
		\end{multline*}

		Considering $H(\xi)$ in \eqref{4}, we conclude that
		\begin{equation}\label{ansatz:eq16}
			H(\xi)\left(n\xi_{,i}^2-\sum_{k=1}^n\xi_{,k}^2\right)=\left(n\xi_{,ii}-\sum_{k=1}^n\xi_{,kk}\right),
		\end{equation}
		consequently,
		\begin{equation}\label{19}
			n\left(\xi_{,i}exp\left\{-\int Hd\xi\right\}\right)_{,i}=\sum_{k=1}^n\left(\xi_{,k}exp\left\{-\int Hd\xi\right\}\right)_{,k}.
		\end{equation}
		
		Combining \eqref{ansatz:eq18} and \eqref{19}, we obtain 
		\begin{equation*}
			\sum_{k=1}^n\left(L_k(x_k)L(\xi)L^{-1}(\xi)\right)_{,k}=n\left(L_i(x_i)L(\xi)L^{-1}(\xi)\right)_{,i}.
		\end{equation*}
		Therefore,
		\begin{equation*}
			\sum_{k=1}^nL'_k=nL'_i.
		\end{equation*}
		Since, the above identity holds for any $i$ we have $nL'_i = nL'_j$, where $i\neq j$, which implies that $L_i\left(x_i\right)= 2\tau x_i+ \gamma_i$ for every $i$, where $\tau, \gamma_i\in \mathbb{R}$. Hence, for each $i\neq j$, from \eqref{ansatz:eq18} we obtain 
		\begin{equation*}
			\frac{\xi_{,i}}{L_i\left(x_i\right)}=L\left(\xi\right),
		\end{equation*}
		i.e., 
		\begin{equation*}
			\frac{\xi_{,i}}{2\tau x_i+ \gamma_i}=\frac{\xi_{,j}}{2 \tau x_i+ \gamma_i}.
		\end{equation*}
		The characteristic of the above equation implies the result.

		The reciprocal of this result is a straightforward computation and similar to the one provided by \cite{Be}. We will sketch the proof for completeness.

		Suppose now that $\xi=\sum_{k=1}^{n}U_k(x_k)$, where $U_k(x_k)=\tau x_k^2+\gamma_kx_k+\theta_k$. Thus,
		\begin{equation*}
			\psi_{,x_i}=\psi'U_i', \hspace{0,5cm}\psi_{,x_ix_j}=\psi''U_i'U_j', \hspace{0,5cm}\psi_{,x_ix_i}=\psi''(U_i')^2+\psi'U_i''.
		\end{equation*}
		The same holds for the functions $\varphi$ and $N$. From equation \eqref{eq1-theo1}, we have
		\begin{eqnarray}
			\left[(n-2)\varphi''N-\varphi N''-2\varphi'N'+2\frac{\varphi}{N} \left(\psi'\right)^2\right]U_i'U_j'=0.\nonumber 
		\end{eqnarray}
		Since there exist $U_i'U_j'\neq0$ in an open subset $\mathbb{R}^n$, 
		\begin{eqnarray}\label{eq1thm1}
			(n-2)\varphi''N-\varphi N''-2\varphi'N'+2\frac{\varphi}{N} \left(\psi'\right)^2=0.
		\end{eqnarray}
		Moreover, for each $i$, the equation \eqref{eq2-theo1} gives
		\begin{multline}\label{eq2thm1}
			\varphi\left[(n-2)\varphi''N-\varphi N''-2\varphi'N'+2\frac{\varphi}{N} \left(\psi'\right)^2\right](U_i')^2+\varphi\left[(n-2)\varphi'N-\varphi N'\right]U_i''\\
			+\left[\varphi\varphi''N-(n-1)\left(\varphi'\right)^2N+\varphi\varphi'N'-\frac{2\varphi^2}{(n-1)N}\left(\psi'\right)^2\right]\sum_{k=1}^{n}(U_k')^2\\ +\varphi\varphi'N\sum_{k=1}^{n}U_k''=\frac{2\Lambda}{n-1}N.
		\end{multline}
		Combining \eqref{eq1thm1}, \eqref{eq2thm1}, $\sum_{k=1}^{n}\left(U_k'\right)^2=4\tau\xi+\beta$, and $\sum_{k=1}^{n}U_k''=2n\tau$, where $\beta=\sum_{k=1}^{n}\left(\gamma_k^2-4\tau\theta_k\right)$, we can rewrite \eqref{eq2thm1} in the following manner: 
		
		\begin{multline}\label{eq3thm1}
			\left[\varphi\varphi''N-(n-1)\left(\varphi'\right)^2N+\varphi\varphi'N'-\frac{2\varphi^2}{(n-1)N}\left(\psi'\right)^2\right]\left(4\tau\xi+\beta\right)\\
			+ 	2\tau\varphi\left[2(n-1)\varphi'N-\varphi N'\right]=\frac{2\Lambda}{n-1}N.
		\end{multline}
		
		From equation \eqref{eq3-theo1}, we obtain
		\begin{eqnarray}\label{eq4thm1}
			2n\tau\varphi\psi'N+\left[\varphi\psi''N-(n-2)\varphi'\psi'N-\varphi\psi'N'\right]\left(4\tau\xi+\beta\right)=0.\nonumber
		\end{eqnarray}
		Finally, equation \eqref{eq4-theo1} yields
		\begin{multline}\label{eq5thm1}
			2n\tau\varphi NN'+\left[\varphi NN''-(n-2)\varphi'NN'-\frac{2(n-2)}{n-1}\varphi\left(\psi'\right)^2\right]\left(4\tau\xi+\beta\right)\\
			=-\frac{2\Lambda}{n-1}\varphi N^2.
		\end{multline}
	\end{proof}

	
	



	In this section, we will show that the classical multi-centered Majumdar-Papapetrou solution can be recovered from Theorem \ref{ansatz}. Moreover, we will provide the proof of Theorem \ref{thm3}, i.e., an example of the Einstein-Maxwell static space that is invariant under an $(n-1)$-dimensional group of dilations.
	

	\begin{corollary}\label{MPclassic}
		Let $\left(\mathbb{R}^{n},\,g\right)$, $n\geq 3$, be the Euclidean space with Cartesian coordinates $x=\left(x_{1},...,x_{n}\right)$ and Euclidean metric $g$. Let $\Omega\subset \mathbb{R}^{n}\setminus\{c^{k}_{1},\ldots,c^{k}_{l}\}$, where $c^{k}_{l} = (c_{l}^{1},\,\ldots,\,c_{l}^{n})\in\mathbb{R}^n$, be an open subset in which the smooth functions $\varphi, \psi$ and $N$ are defined. Then, there exists a metric $\overline{g}=g/\varphi^2$ such that $\left(\Omega,\,\overline{g},\,N,\,\psi\right)$ is a solution for the electrovacuum system in which
		
		\begin{eqnarray}\label{rot}
			\begin{cases}
				\displaystyle N(x)^{-1}= {\sum_{l=1}^{s} \dfrac{k\lambda_l}{r_l^{n-2}}-k_1},\\\\
				\varphi = N^{1/(n-2)},\\\\[1mm]
				\pm\sqrt{\dfrac{2(n-2)}{(n-1)}}\psi = 1 - N,
			\end{cases}
		\end{eqnarray}
		where $r_l^2=\sum_{k=1}^{n}  (x_k - c_l^k)^2$ for each center $c_{l}^{k} = (c_{l}^{1},\,\ldots,\,c_{l}^{n})\in\mathbb{R}^n$. Here, $l\in\{1,\,\ldots,\,s\}.$

	\end{corollary}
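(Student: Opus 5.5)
Corollary~\ref{MPclassic} follows from item~(1) of Theorem~\ref{ansatz}: choose $\xi$ so that \eqref{eq:xi-condition} integrates explicitly. Item~(1) is an equivalence. A triple $(N,\varphi,\psi)$ depending on $\xi$ with $\varphi=N^{1/(n-2)}$ (equivalently $(n-2)\varphi'N-\varphi N'=0$) and $\psi$ satisfying \eqref{GMP1} solves the electrovacuum system exactly when \eqref{eq:xi-condition} holds. The conditions \eqref{eq2thm2} come from the relation $\varphi'/\varphi=\frac{1}{n-2}N'/N$, which integrates (after normalizing a multiplicative constant) to $\varphi=N^{1/(n-2)}$. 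They also come from $\psi'=\pm\sqrt{\frac{n-1}{2(n-2)}}N'$, which integrates to \eqref{GMP1} with the additive constant fixed. Under these conditions, equations (1), (2), (3) in the proof of Theorem~\ref{ansatz} all reduce to
\[
\frac{N''}{N'}-2\frac{N'}{N}=-\frac{\Delta_g\xi}{|\nabla_g\xi|^2}.
\]
The off-diagonal equation \eqref{eq1-theo1} holds automatically by \eqref{derivative} and \eqref{eq4thm2}. Moreover, the left-hand side equals $\frac{d}{d\xi}\log\bigl(N'/N^2\bigr)=\frac{d}{d\xi}\log\bigl(-(N^{-1})'\bigr)$.

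Next, I choose $\xi$ harmonic and set
\[
\xi(x)=\sum_{l=1}^{s}\frac{\lambda_l}{r_l^{n-2}},
\]
a superposition of translated Newtonian potentials. This is smooth on $\mathbb{R}^n\setminus\{c_1,\dots,c_s\}$ and satisfies $\Delta_g\xi=0$ there. Its gradient vanishes only on a closed set with empty interior (for instance, where the $\lambda_l$ have mixed signs). I restrict $\Omega$ to the open set where $\nabla_g\xi\neq 0$, so the right-hand side of \eqref{eq:xi-condition} vanishes. The ODE then becomes $\frac{d}{d\xi}\log\bigl(-(N^{-1})'\bigr)=0$, so $(N^{-1})'=k$ is constant and $N^{-1}=k\xi-k_1$, which is exactly \eqref{rot}.

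Finally, I shrink $\Omega$ further to where $k\xi-k_1>0$, so that $N>0$. Applying Theorem~\ref{ansatz}(1) in the reverse direction then shows that $(\Omega,g/\varphi^2,N,\psi)$ solves the electrovacuum system with $\Lambda=0$; choosing $k_1=-1$, $k\lambda_l=q_l$ recovers \eqref{MPC}. The main obstacle is justifying that the converse direction of item~(1) is truly sufficient, since the paper's proof of that direction is sketched. To settle this, I would substitute $\varphi=N^{1/(n-2)}$, $\psi'=AN'$ and \eqref{d1} directly into \eqref{eq1-theo1}--\eqref{eq4-theo1}. The off-diagonal and trace-free diagonal parts then cancel identically, and each of the remaining traced equations is proportional to $\bigl[\frac{N''}{N'}-2\frac{N'}{N}\bigr]|\nabla_g\xi|^2+\Delta_g\xi$, as computed above. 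This also handles the degenerate points where $N'=0$, since $N'=k\neq0$.
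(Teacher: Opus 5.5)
Your proposal is correct and follows the paper's proof. Like the paper, you take $\xi$ to be a harmonic superposition of translated Newtonian potentials (the paper uses the opposite sign for $\xi$), so item~(1) of Theorem~\ref{ansatz} reduces to $(N^{-1})''=0$ and hence $N^{-1}=k\xi-k_1$; restricting $\Omega$ to $\{k\xi-k_1>0\}$ and directly checking the converse direction are sound additions the paper omits, though you slip in writing $N'=k$, since $(N^{-1})'=k$ gives $N'=-kN^{2}$, which is still nonzero, so your remark about degenerate points stands.
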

	\begin{proof}[{\bf Proof of Corollary \ref{MPclassic}}]
		Consider 
		\begin{eqnarray}
			\xi = -\sum_{l=1}^{s} \lambda_l\left(\sum_{k=1}^{n}  (x_k - c_l^k)^2\right)^{\frac{2-n}{2}},\nonumber
		\end{eqnarray}
		where $c_{l}^{k} = (c_{l}^{1},\,\ldots,\,c_{l}^{n})$ represents the $l$-th multi-centered black hole. The first derivative of the above identity leads us to
		\begin{eqnarray}
			\xi_{,s} = -(2-n)\sum_{k=1}^{n}\lambda_l(x_s - c_l^s)\left[ \sum_{l=1}^{n}(x_k-c_l^k)^2\right]^{-\frac{n}{2}}.\nonumber
		\end{eqnarray}
		Computing the second derivative, we get
		\begin{eqnarray}
			\xi_{,ss} = -(2-n)\sum_{l=1}^{n}\lambda_l\left[\left( \sum_{k=1}^{n}(x_k-c_l^k)^2\right)^{-\frac{n}{2}}-n(x_s-c_l^s)^2\left( \sum_{k=1}^{n}(x_k-c_l^k)^2\right)^{-\frac{n+2}{2}}\right].\nonumber
		\end{eqnarray}
		
		Since $\Delta_{g}\xi=\sum_{s=1}^n\xi_{,ss}$, we have 
		\begin{multline*}
			\Delta_{g}\xi=\sum_s\xi_{,ss}=\sum_{s=1}^N\left(-(2-n)\sum_{k=1}^{n}\lambda_l(x_s - c_l^s)\left[ \sum_{l=1}^{n}(x_k-c_l^k)^2\right]^{-\frac{n}{2}}\right)\\
			=-(2-n)\sum_{l=1}^{n}\lambda_l\left[n\left( \sum_{k=1}^{n}(x_k-c_l^k)^2\right)^{-\frac{n}{2}}-n\sum_{s=1}^n(x_s-c_l^s)^2\left( \sum_{k=1}^{n}(x_k-c_l^k)^2\right)^{-\frac{n+2}{2}}\right] \\ = 0,
		\end{multline*}
		consequently, $\Delta_{g}\xi=0$, i.e., $\xi$ is a harmonic function. From the first item in Theorem \ref{ansatz}, we have
		\begin{eqnarray}
			\displaystyle\frac{N''}{N'}-2\frac{N'}{N}=0.
			\nonumber
		\end{eqnarray}
		Hence, 
		\begin{eqnarray}
			\displaystyle N(\xi)= -\frac{1}{k\xi+k_1},\nonumber
		\end{eqnarray}
		where $k$ and $k_1$ are constants.
		
		Finally, we have
		\begin{eqnarray}
			\displaystyle N(\xi)= \dfrac{1}{\sum_{l=1}^{s} \dfrac{k\lambda_l}{r_l^{n-2}}-k_1},\nonumber
		\end{eqnarray}
		where $r_l^2=\sum_{k=1}^{n}  (x_k - c_l^k)^2$. 
		Taking $k_1 = -1$ and $k=1$ we get the classical Majumdar-Papapetrou solution. Hence, $k\lambda_l$ represents the charge (or mass) of the black holes.
	\end{proof}

	Let us provide solutions of the electrovacuum system \((\Omega, \overline{g}, N, \psi)\) which are invariant under an $(n-1)$-dimensional group of dilations subject to the additional condition that it is in the Majumdar-Papapetrou class, i.e., satisfying Theorem \ref{ansatz} - (1).

	\begin{theorem}\label{sera}[Theorem \ref{thm3}]
		Let $(\mathbb{R}^n,g)$, $n\ge 3$, be the Euclidean space with Cartesian coordinates
		$x=(x_1,\dots,x_n)$ and the Euclidean metric $g$.
		Let $\Omega\subset\mathbb{R}^n$ be an open set. Take the integers $1\le m_1\le m_2\le n$ and the real constants
		$a_i$ $(1\le i\le m_1)$, $b_j$ $(1\le j\le m_2)$ satisfying
		\[
		\sum_{i=1}^{m_1} a_i \neq 0,
		\qquad
		\prod_{j=1}^{m_2} b_j \neq 0.
		\]
		Define the dilation--invariant function
		\[
		\xi(x)=\dfrac{\sum_{i=1}^{m_1} a_i x_i}{\sum_{j=1}^{m_2} b_j x_j}.
		\]
		Set
		\[
		\eta=\sum_{j=1}^{m_2} b_j^2,
		\qquad
		\theta=-2\sum_{i=1}^{m_1} a_i b_i,
		\qquad
		\delta=\sum_{i=1}^{m_1} a_i^2,
		\]
		and arbitrary constants $k\neq0$ and $k_1\in\mathbb{R}$, define the lapse
		function $N$ as follows:
		\begin{eqnarray*} 
			\dfrac{1}{N(x)}
			=
			k_1
			+
			\frac{2k}{\sqrt{4\eta\delta-\theta^2}}
			\arctan\!\left(
			\frac{2\eta\,\xi(x)+\theta}{\sqrt{4\eta\delta-\theta^2}}
			\right),
		\end{eqnarray*}
		where $4\eta\delta-\theta^2>0$.
		Define also
		\[
		\varphi = N^{1/(n-2)},
		\qquad
		\psi=\pm\sqrt{\frac{n-1}{2(n-2)}}\,(1-N),\qquad\mbox{and}\qquad\overline{g}=g/\varphi^{2}.
		\]

		Then, $(\Omega,\,\overline{g},\,N,\,\psi)$ is a solution to the electrovacuum Einstein--Maxwell system with a vanishing cosmological constant. Moreover, the associated static spacetime belongs to the Majumdar--Papapetrou class and is invariant under an $(n-1)$--dimensional group of dilations.
	\end{theorem}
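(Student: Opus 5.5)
The plan is to obtain Theorem~\ref{sera} as a direct application of the sufficiency direction of Theorem~\ref{ansatz}, item~(1). The choices $\varphi=N^{1/(n-2)}$ and $\psi=\pm\sqrt{\frac{n-1}{2(n-2)}}(1-N)$ give exactly the relations \eqref{eq2thm2}, namely $(n-2)\varphi'N-\varphi N'=0$ and $(\psi')^2=\frac{n-1}{2(n-2)}(N')^2$. These relations carry the whole reduction, so I proceed as follows.

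\emph{Step 1: the reduction.} The coefficient of $\xi_{,ij}$ in \eqref{eq1thm2} vanishes. Differentiating the first relation gives \eqref{derivative}, and combining it with the second shows that the coefficient of $\xi_{,i}\xi_{,j}$ also vanishes, so \eqref{eq1-theo1} holds identically. As shown in the proof of Theorem~\ref{ansatz}, with $\Lambda=0$ the remaining equations \eqref{eq2-theo1}, \eqref{eq3-theo1} and \eqref{eq4-theo1} then collapse to the single scalar ODE \eqref{eq:xi-condition}:
\[
\frac{N''}{N'}-2\frac{N'}{N}=-\frac{\Delta_g\xi}{|\nabla_g\xi|^2}.
\]
Equivalently, $\frac{d}{d\xi}\log\bigl|\frac{d}{d\xi}(N^{-1})\bigr| = -\frac{\Delta_g\xi}{|\nabla_g\xi|^2}$. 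The diagonal equations need care: subtracting the trace shows that the $(\xi_{,i})^2$ terms in \eqref{eq3thm2} cancel, and what remains is (1)--(3). So it suffices to verify this one identity for the given $\xi$ and $N$.

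\emph{Step 2: computing the invariant.} Write $M=\sum_{i\le m_1}a_ix_i$ and $P=\sum_{j\le m_2}b_jx_j$. Both are linear, so $\Delta_g M=\Delta_g P=0$, $|\nabla_g M|^2=\delta$, $|\nabla_g P|^2=\eta$ and $\langle\nabla_g M,\nabla_g P\rangle=-\theta/2$. From $\nabla_g\xi=(P\nabla_g M-M\nabla_g P)/P^2$ one gets
\[
|\nabla_g\xi|^2=\frac{\eta\xi^2+\theta\xi+\delta}{P^2},\qquad
\Delta_g\xi=-\frac{2\langle\nabla_g M,\nabla_g P\rangle}{P^2}+\frac{2M\eta}{P^3}=\frac{2\eta\xi+\theta}{P^2}.
\]
Hence the right-hand side of \eqref{eq:xi-condition} is the function of $\xi$ alone
\[
\frac{2\eta\xi+\theta}{\eta\xi^2+\theta\xi+\delta}.
\]
This is the crucial point: the quotient must depend only on $\xi$, and it does. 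Since $4\eta\delta-\theta^2>0$ and $\eta>0$, the quadratic $q(\xi)=\eta\xi^2+\theta\xi+\delta$ has no real zeros, so the quotient is smooth on $\{P\neq 0\}$.

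\emph{Step 3: solving the ODE.} Integrating once gives $\frac{d}{d\xi}(N^{-1})=k/q(\xi)$. Integrating again, with $D=4\eta\delta-\theta^2$, uses
\[
\frac{d}{d\xi}\left[\frac{2}{\sqrt D}\arctan\frac{2\eta\xi+\theta}{\sqrt D}\right]=\frac{4\eta}{D+(2\eta\xi+\theta)^2}=\frac{1}{q(\xi)},
\]
and reproduces exactly the stated $N^{-1}$. The sign of $k$ is absorbed because the identity involves only the logarithmic derivative of $|(N^{-1})'|$. Positivity of $N$ is imposed through $\Omega$, as in Remark~\ref{rem:lapse-positive}.

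\emph{Step 4: the symmetry.} $\xi$ is homogeneous of degree $0$, so it is invariant under $x\mapsto\lambda x$. It is also invariant under translations along the $(n-2)$-dimensional subspace $\{v:\ \sum_{i\le m_1} a_iv_i=\sum_{j\le m_2} b_jv_j=0\}$; these two subspaces are transverse because $a$ and $b$ are independent. Together these generate an $(n-1)$-dimensional group preserving the level sets of $\xi$, hence preserving $N$, $\varphi$, $\psi$ and the spacetime up to the induced conformal action. Membership in the Majumdar--Papapetrou class is immediate from the definitions of $\varphi$ and $\psi$.

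I expect the main obstacle to be Step~1: making sure that the sufficiency direction of item~(1) really closes. In particular, the diagonal equation \eqref{eq2-theo1} contains the $i$-dependent term $(\xi_{,i})^2$, and one must check that its coefficient $-(n-1)N'\varphi'+2\varphi(\psi')^2/N$ vanishes by \eqref{eq4thm2}. Step~2 is a routine but sign-sensitive computation.
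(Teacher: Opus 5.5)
Your proposal is correct and follows the paper's own route. You invoke Theorem~\ref{ansatz}, item~(1), to reduce everything to the ODE \eqref{eq:xi-condition}, compute $P^2|\nabla_g\xi|^2=\eta\xi^2+\theta\xi+\delta$ and $P^2\Delta_g\xi=2\eta\xi+\theta$ (you via the quotient rule for $M/P$, the paper coordinate block by coordinate block), and integrate twice to reach the arctangent; your extra checks of the reduction's cancellations, of the sign of $k$, and of the $(n-1)$-dimensional group generated by $x\mapsto\lambda x$ and translations along $\{M=P=0\}$ go slightly beyond what the paper writes without changing the argument.
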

	
	\begin{proof}[{\bf Proof of Theorem \ref{sera}}]
		
		Consider $\mathbb{R}^{n}$ as the Euclidean space with Cartesian coordinates $x=\left(x_{1},...\, ,x_{n}\right)$, and take the dilation invariant
		\begin{eqnarray}\label{dil}
			\xi(x)=\frac{\sum_{i=1}^{m_1}a_ix_i}{\sum_{j=1}^{m_2}b_jx_j}, \hspace{0,5cm} 1\leq m_1\leq m_2\leq n,
		\end{eqnarray}
		where $a_i$ and $b_j$ are real constants satisfying 
		\begin{eqnarray}\label{dil2}
			\sum_{i=1}^{m_1}a_i\neq 0,\quad\mbox{and}\quad \prod_{j=1}^{m_2}b_j\neq0,
		\end{eqnarray}
		see \cite{RO,Tenenblat} and the references therein.
		
		Define  
		\begin{eqnarray}
			M(x) = \sum_{i=1}^{m_1} a_i x_i, \qquad P(x) = \sum_{j=1}^{m_2} b_j x_j, \qquad 1 \le m_1 \le m_2 \le n,\nonumber
		\end{eqnarray}  
		so that the variable \(\xi\) can be expressed in the following manner  
		\begin{eqnarray*}
			\xi(x) = \frac{M(x)}{P(x)}.
		\end{eqnarray*}  
		
		The partial derivative of \(\xi\) with respect to the variable \(x_k\) is given by  
		\begin{eqnarray*}
			\xi_{,k}= \frac{M_{,k} \, P - M \, P_{,k}}{P^2}.
		\end{eqnarray*}

		\medskip
		\noindent \textbf{First:} \(1 \leq k \leq m_1\).  
		In this case, we have $M_{,k} = a_k$ and $P_{,k} = b_k$. Therefore,
		\begin{eqnarray}\label{partial-derivative-eq1}
			\xi_{,k}=\frac{a_k - b_k\xi}{P}, \hspace{0,5cm} \xi_{,kk} = -\frac{2\,b_k}{P}\xi_{,k}.
		\end{eqnarray} 
		
		\medskip
		\noindent \textbf{Second:} \(m_1 < k \leq m_2\).  
		Here, $M_{,k} = 0$ and $P_{,k} = b_k$. Hence,
		\begin{eqnarray}\label{partial-derivative-eq2}
			\xi_{,k} = -\frac{b_k}{P}\xi, \hspace{0,5cm} \xi_{,kk} = \frac{2b^2_k}{P^2}\xi.
		\end{eqnarray}

		Note that $\vert\nabla_{g}\xi\vert^2=\sum_{k=1}^{n}\xi_{,k}^2$. Since we are considering the dilation invariant, it follows that $\xi_{,k}=\dfrac{a_k - b_k\xi}{P}$ for \(1 \leq k \leq m_1\) and $\xi_{,k} = -\dfrac{b_k}{P}\xi$ for \(m_1 < k \leq m_2\). Thus, $\vert\nabla_{g}\xi\vert^2=\sum_{k=1}^{m_1}\dfrac{\left(a_k - b_k\xi\right)^2}{P^2}+\sum_{k=m_1+1}^{m_2}\dfrac{b_k^2\xi^2}{P^2}$. Consequently, $P^2\vert\nabla_{g}\xi\vert^2=\sum_{k=1}^{m_1}\left(a_k-b_k\xi\right)^2+\sum_{k=m_1+1}^{m_2}b_k^2\xi^2$, and hence 
		\begin{multline}\label{d3}
			\frac{d}{d\xi}\left(P^2\vert\nabla_{g}\xi\vert^2\right)=\dfrac{d}{d\xi}\left(\sum_{k=1}^{m_1}\left(a_k-b_k\xi\right)^2+\sum_{k=m_1+1}^{m_2}b_k^2\xi^2\right)\\
			=-2\sum_{k=1}^{m_1}b_k\left(a_k-b_k\xi\right)+2\sum_{k=m_1+1}^{m_2}b_k^2\xi.
		\end{multline}

		On the other hand, since $\Delta_{{g}}\xi=\sum_{k=1}^n\xi_{,kk}$, we get
		\begin{eqnarray} 
			\Delta_{g}\xi=-2\sum_{k=1}^{m_1}\frac{\,b_k}{P}\xi_{,k}+\sum_{k=m_1+1}^{m_2}\frac{2b^2_k}{P^2}\xi.\nonumber
		\end{eqnarray}
		Moreover, $\xi_{,k}=\dfrac{a_k - b_k\xi}{P}$ for $k\leq m_1$. So,  
		\begin{eqnarray} 
			\Delta_{g}\xi=-2\sum_{k=1}^{m_1}\frac{b_k\left(a_k - b_k\xi\right)}{P^2}+\sum_{k=m_1+1}^{m_2}\frac{2b^2_k}{P^2}\xi.\nonumber
		\end{eqnarray}
		Consequently, 
		\begin{eqnarray}\label{lap} 
			P^2\Delta_{g}\xi=-2\sum_{k=1}^{m_1}b_k\left(a_k - b_k\xi\right)+2\sum_{k=m_1+1}^{m_2}b^2_k\xi.
		\end{eqnarray}
		Combining the equations \eqref{d3} with \eqref{lap}, we have the fundamental relation given by the dilation invariant:
		\begin{eqnarray*}
			\dfrac{1}{P^2}\dfrac{d}{d\xi}\left(P^2\vert\nabla_{g}\xi\vert^2\right)=\Delta_g \xi.
		\end{eqnarray*}

		We know from \eqref{eq:xi-condition} that the lapse function satisfies the equation
		\begin{eqnarray}\label{eqboaaaa}
			\displaystyle\frac{N''}{N'}-2\frac{N'}{N}=-\frac{\Delta_g\xi}{\vert\nabla_{g}\xi\vert^2}.
		\end{eqnarray}
		Therefore, 
		\begin{eqnarray*}
			\dfrac{d}{d\xi}\log\left(-\dfrac{d}{d\xi}\left(N^{-1}\right)\right)= -   \dfrac{1}{P^2\vert\nabla_{g}\xi\vert^2}\dfrac{d}{d\xi}\left(P^2\vert\nabla_{g}\xi\vert^2\right).
		\end{eqnarray*}
		By integration, 
		\begin{eqnarray*}
			\dfrac{d}{d\xi}\left(-N^{-1}\right)=\frac{k}{P^2\vert\nabla_{g}\xi\vert^2},
		\end{eqnarray*}
		where $k$ is a non-null constant. So, 
		\begin{eqnarray}\label{eq2-thm3}
			- N^{-1}(\xi)=k\int \dfrac{1}{P^2\vert\nabla_{g}\xi\vert^2}d\xi + k_1,
		\end{eqnarray}
		where $k_1\in\mathbb{R}$.

		Note that $P^2\vert\nabla_{g}\xi\vert^2=\sum_{k=1}^{m_1}\left(a_k-b_k\xi\right)^2+\sum_{k=m_1+1}^{m_2}b_k^2\xi^2$, which can be rewritten as, $P^2\vert\nabla_{g}\xi\vert^2=\eta\xi^2+\theta\xi+\delta$ with $\eta=\sum_{k=1}^{m_2}b_k^2$, $\theta=-2\sum_{k=1}^{m_1}a_kb_k$, and $\delta=\sum_{k=1}^{m_1}a_k^2$. Thus,
		\begin{eqnarray*}
			\int \frac{1}{P^2\vert\nabla_{g}\xi\vert^2}d\xi=\int\frac{1}{\eta\xi^2+\theta\xi+\delta}d\xi.
		\end{eqnarray*}
		
		We know that $4\eta\delta-\theta^2 \ge 0$ (see Remark~\ref{rem:lapse-positive}).
		If $4\eta\delta-\theta^2 > 0$, the integral in~\eqref{eq2-thm3} can be computed explicitly, yielding
		\[
		-\,N^{-1}(\xi)
		=
		k_{1}
		+
		\frac{2k}{\sqrt{4\eta\delta - \theta^{2}}}
		\arctan\!\left(
		\frac{2\eta\xi + \theta}{\sqrt{4\eta\delta - \theta^{2}}}
		\right),
		\]
		where $4\eta\delta - \theta^{2}>0$.
	\end{proof}


	\

	\noindent{\bf Conflict of Interest:} There is no conflict of interest to disclose.
	
	\
	
	\noindent{\bf Data Availability:} Not applicable.
	
	\

\end{document}